\newcommand{\tpdf}{\texorpdfstring}
\newcommand{\Cay}{\operatorname{Cay}}
\newcommand{\mb}{\mathbf}
\newcommand{\mbhalf}{\tfrac{\mb 1}{\mb 2}}
\newcommand{\ssquare}{\scalebox{0.6}{$\square$}}
\newtheorem{theorem}{Theorem}
\newtheorem{lemma}[theorem]{Lemma}
\newtheorem{corollary}[theorem]{Corollary}
\newtheorem{conjecture}[theorem]{Conjecture}
\numberwithin{theorem}{section}
\numberwithin{equation}{section}
\theoremstyle{definition}
\newtheorem{definition}[theorem]{Definition}
\newtheorem{remark}[theorem]{Remark}
\title[Extension of Kriz's example]{Separating topological recurrence from measurable recurrence: exposition and extension of Kriz's example }
\date{\today}
\author{John T. Griesmer}
\email{jtgriesmer@gmail.com}
\address{Department of Applied Mathematics and Statistics, Colorado School of Mines, Golden, Colorado, USA}
\begin{document}

\begin{abstract}
We prove that for every infinite set $E\subseteq \mathbb Z$, there is a set $S\subseteq E-E$ which is a set of topological recurrence and not a set of measurable recurrence.   This extends a result of Igor Kriz, proving that there is a set of topological recurrence which is not a set of measurable recurrence.  Our construction follows Kriz's closely, and this paper can be considered an exposition of the original argument.

\end{abstract}
\maketitle

\section{Structure of difference sets}

We write $\mathbb Z$ for the group of integers.  If $A\subseteq \mathbb Z$, we write $A-A$ for the \emph{difference set}, defined to be $\{a_1-a_2:a_1, a_2\in A\}$. If $A, B\subseteq \mathbb Z$, we write $A+B$ for the \emph{sumset}, $\{a+b:a\in A, b\in B\}$. The \emph{upper asymptotic density} (or just \emph{upper density}) of $A$ is $\bar{d}(A):=\limsup_{n\to \infty} \frac{|A\cap \{1,\dots, n\}|}{n}$.  If the limit exists, we write $d(A)$ in place of $\bar{d}(A)$.

\subsection{Kriz's theorem}
We say that $S\subseteq \mathbb Z$ is a \emph{set of density recurrence} (or ``$S$ is \emph{density recurrent}'') if for all $A\subseteq \mathbb Z$, with $\bar{d}(A)>0$, there exists $a, b\in A$ such that $b-a\in S$.  In other words, $(A-A)\cap S\neq \varnothing$ whenever $\bar{d}(A)>0$. The  set $\{n^2 : n\in \mathbb N\}$ of perfect squares is density recurrent, as proved by Furstenberg (\cite{Furstenberg77}, via ergodic theory) and S\'ark\H{o}zy (\cite{Sarkozy}, using the circle method).   This result, along with earlier work by  Bogoliouboff \cite{Bogoliouboff} and F{\o}lner \cite{Folner}, motivated further investigation into the structure of difference sets.

Another result influencing this investigation is van der Waerden's theorem on arithmetic progressions \cite{vdW}: if $\mathbb Z$ is partitioned into finitely many sets $A_1,\dots, A_r$, then at least one of the sets $A_j$ contains arithmetic progressions of every finite length.  The ``density version'' of van der Waerden's theorem is Szemer\'edi's theorem \cite{Sz}, which says that if $\bar{d}(A)>0$, then $A$ contains arithmetic progressions of every finite length.  These results suggested a possible deep connection between partition results and density results.  To formalize this suggestion, we say a set $S\subseteq \mathbb Z$ is a \emph{set of chromatic recurrence} (or ``is \emph{chromatically recurrent}'') if for every partition $\{A_1,\dots, A_r\}$ of $\mathbb Z$ into finitely many sets, there is a cell $A_j$ of the partition such that $(A_j-A_j)\cap S\neq \varnothing$. Upper density is finitely subadditive, so if $\mathbb Z$ is partitioned into finitely many sets, then at least one of those sets has positive upper density, and its difference set therefore intersects every density recurrent set.  Thus every density recurrent set is also chromatically recurrent.  Bergelson \cite{BergelsonERT},  Furstenberg \cite{FurstenbergBook}, and Ruzsa \cite{Ruzsa82} each asked whether the converse holds: is every chromatically recurrent set also density recurrent?  Igor Kriz gave a negative answer in the following theorem from \cite{Kriz}.  The main purpose of this article is to give an expository proof.
\begin{theorem}\label{thm:Kriz}
  There is a set of integers which is chromatically recurrent and not density recurrent.
\end{theorem}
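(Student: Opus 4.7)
The plan is to reduce the construction to a problem about Cayley graphs on finite abelian groups and then lift it to $\mathbb{Z}$ using Bohr-set pullbacks. Concretely, I would aim to produce finite abelian groups $G_n$ together with symmetric sets $S_n \subseteq G_n \setminus \{0\}$ whose Cayley graphs $\Cay(G_n, S_n)$ have chromatic numbers tending to infinity while still admitting independent sets of density bounded below by a fixed constant $c > 0$ independent of $n$. This asymmetry between chromatic number and independence ratio is the combinatorial engine: large chromatic number will deliver chromatic recurrence, while the persistent large independent set will supply a positive-density subset of $\mathbb{Z}$ avoiding differences in $S$.

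\textbf{Construction of the finite graphs.} For the core combinatorial input I would turn to Kneser- or Schrijver-type constructions transplanted onto abelian groups, for instance on $G = (\mathbb{Z}/p)^n$ with $S_n$ defined through Hamming-type neighborhoods of $\mbhalf$, in keeping with the notation set up in the paper's preamble. The classical fact that Kneser graphs $K(n,k)$ have chromatic number $n-2k+2$ while retaining independence ratio close to $1/2$ is precisely the asymmetric behavior one wants; the task is to realize a variant of this phenomenon on an abelian group so that the resulting edge set is actually a Cayley set, and so that the large independent set respects the group structure well enough to survive pullback.

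\textbf{Lift to $\mathbb{Z}$.} To transfer the construction, I would select for each $n$ a tuple of characters defining a map $\pi_n\colon \mathbb{Z} \to G_n$, and take $S \subseteq \mathbb{Z}$ to be a union or amalgamation of approximate preimages $\pi_n^{-1}(S_n)$ in the $\Bohr$ sense. Chromatic recurrence then follows because any $r$-coloring of $\mathbb{Z}$ descends, for some $n$ with $\chi(\Cay(G_n, S_n)) > r$, to an $r$-coloring of $G_n$ which must contain a monochromatic Cayley edge, providing a monochromatic pair in $\mathbb{Z}$ with difference in $S$. Failure of density recurrence would be established by lifting the independent sets $I_n \subseteq G_n$ coherently, using equidistribution of $\pi_n$, to obtain a single set $A \subseteq \mathbb{Z}$ of positive upper density with $(A - A) \cap S = \varnothing$.

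\textbf{Main obstacle.} The principal difficulty is producing a single positive-density $A$ that simultaneously avoids differences coming from \emph{every} scale $n$ of the construction, while $S$ remains chromatically recurrent against \emph{every} finite partition. These requirements pull in opposite directions: $S$ must be rich enough to defeat all finite colorings, yet sparse enough that $A - A$ misses it at all scales at once. Coordinating the independent sets across the sequence of groups, so that their pullbacks to $\mathbb{Z}$ align inside a single set of positive density, is the heart of the matter and precisely where Kriz's original argument concentrates its ingenuity.
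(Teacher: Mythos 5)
Your Step I is essentially the paper's: Hamming balls $H_{2k+1}(\mb 1)$ in $\mathbb F_2^d$ give Cayley graphs containing $KG(d,\lfloor d/2\rfloor -k)$ (hence chromatic number at least $2k+1$ by Lov\'asz's theorem), while the complementary Hamming ball $H_{\lfloor d/2\rfloor-k}(\mb 0)$ is an independent set of density approaching $\tfrac12$, and these are copied into $\mathbb Z$ through $n\mapsto n\bm\alpha\in\mathbb T^d$. But your final paragraph concedes, rather than closes, the actual gap: you never explain how to assemble the scales. That assembly is not a detail --- it is the content of the theorem, and saying that coordinating the independent sets is ``the heart of the matter'' restates the problem instead of solving it. Note in particular that the naive move of intersecting the pulled-back independent sets $A_n$ fails outright: the densities multiply, so the intersection has upper density on the order of $\prod_n \delta_n\to 0$ unless something forces the sets to align.

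The paper's resolution (Lemma \ref{lem:2Pieces}) is where all the remaining work lives. One maintains a \emph{finite} witness $(C_k,m_k)$: a set $C_k\subseteq[m_k]$ with $|C_k|>\delta m_k$, $(C_k-C_k)\cap S_k=\varnothing$, and the padding condition $C_k+S_k+S_k\subseteq[m_k]$. The next chromatically recurrent piece $S'$ is \emph{dilated} by $m_k$ before being adjoined (dilation preserves chromatic recurrence by Lemma \ref{lem:DilateChromatic}), so that differences from $m_kS'$ act between length-$m_k$ blocks while differences from $S_k$ act within blocks, and the two kinds cannot interact. The new witness $C_{k+1}\subseteq[lm_k]$ is then a union of translates $C_k+m_kb$, $b\in B$, where $(B,l)$ witnesses the $\eta$-nonrecurrence of $S'$, \emph{together with a second shifted copy} $C_k+e_0+m_k(b+f_0)$; a unique-representation argument shows $C_{k+1}$ still avoids all $S_{k+1}$-differences, and the factor of $2$ coming from the second copy is exactly what keeps the density from collapsing --- it degrades only from $\delta'$ to $2\delta'\eta$, which stays above $\delta$ when $\eta<\tfrac12$ is chosen close enough to $\tfrac12$ at each stage. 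Without the dilation, the block decomposition, and this doubling trick, your outline produces the finite pieces but not a single positive-density set avoiding every scale at once, so as written the proposal establishes Step I but not the theorem.
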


Readers familiar with \cite{Kriz} will see that the methods there can be combined with Lemma \ref{lem:InfiniteDense} below to prove the following generalization of Theorem \ref{thm:Kriz}.   Our second purpose in this article is to give an explicit proof of this generalization.
\begin{theorem}\label{thm:KrizInSminusS}
  If $E$ is an infinite set of integers, then there is a subset $S\subseteq E-E$ such that $S$ is chromatically recurrent and not density recurrent.
\end{theorem}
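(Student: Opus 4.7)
The plan is to follow Kriz's construction from \cite{Kriz} and to use Lemma \ref{lem:InfiniteDense} to force the resulting set into $E - E$. Kriz's construction produces $S$ as a countable union $\bigcup_k S_k$, where each $S_k$ is a finite symmetric subset of $\mathbb Z$ built from a (random) Cayley set in a carefully chosen finite abelian group $G_k$, and where $S_k$ is embedded into $\mathbb Z$ via a homomorphism (equivalently, a ``scale'' or ``placement''). The freedom in Kriz's argument is essentially a freedom in choosing these placements, subject only to a rapid-growth condition ensuring that the blocks $S_k$ are well-separated and that each new block dominates the previously placed ones.

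To prove Theorem \ref{thm:KrizInSminusS}, I would proceed inductively. Having committed to the abstract combinatorial data of $S_k$ (the group $G_k$ and the Cayley set inside it) and to a lower bound on the scale at which $S_k$ must be placed, I would apply Lemma \ref{lem:InfiniteDense} to select a placement that simultaneously (i) respects that lower bound and (ii) lands $S_k$ inside $E - E$. This is where Lemma \ref{lem:InfiniteDense} carries the whole weight of the generalization: it presumably asserts that $E - E$ is rich enough that every finite ``template'' required by Kriz's construction admits a scaled/translated copy inside $E - E$ at an arbitrarily large scale.

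Once $S = \bigcup_k S_k \subseteq E - E$ is built this way, the verification proceeds exactly as in \cite{Kriz}. Chromatic recurrence follows from a pigeonhole-type argument: given any finite partition of $\mathbb Z$, the probabilistic properties of the Cayley sets $S_k$ guarantee that, with positive probability, some cell $A_j$ satisfies $(A_j - A_j) \cap S_k \ne \varnothing$ for some $k$. Non-density-recurrence is established by exhibiting an explicit set $A \subseteq \mathbb Z$ with $\bar d(A) > 0$ and $(A - A) \cap S = \varnothing$, built from the same random construction. Both arguments depend only on the internal combinatorics of the $S_k$ and on the separation of their scales, neither of which is affected by the additional placement constraint.

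The main obstacle is a quantitative matching problem: Kriz's construction imposes stringent growth requirements on the scale of $S_k$, and we must ensure that the placements supplied by Lemma \ref{lem:InfiniteDense} can always be taken above any prescribed threshold while still embedding the template into $E - E$. Once the lemma is phrased so as to supply arbitrarily large valid scales, this obstacle evaporates and the remainder of the proof is a careful bookkeeping of Kriz's original argument with the placements chosen through the lemma rather than freely in $\mathbb Z$.
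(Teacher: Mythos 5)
Your high-level plan --- run Kriz's inductive construction and use Lemma \ref{lem:InfiniteDense} to force each new finite piece into $E-E$ --- is indeed the paper's strategy, but the proposal skips the one step that actually requires a new idea, and it misdescribes the mechanism by which Lemma \ref{lem:InfiniteDense} enters. The gluing step (Lemma \ref{lem:2Pieces}) does not accept a new piece placed at ``an arbitrarily large scale'': it requires the new piece to be exactly $m_k S'$, a dilate by the \emph{specific} modulus $m_k$ inherited from the previous stage, where $S'$ itself (not $m_kS'$) must be $\eta$-nonrecurrent. So it is not enough to find some copy of the template inside $E-E$ above a threshold; you must find one inside $m_k\mathbb Z\cap(E-E)$ whose quotient by $m_k$ is still $\eta$-nonrecurrent. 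The paper does this (Lemma \ref{lem:PiecesInSminusS}) by first passing to an infinite $E'\subseteq E$ whose elements are mutually congruent mod $m_k$, so that $E'-E'\subseteq m_k\mathbb Z$, applying Lemma \ref{lem:InfiniteDense} to $E'$, extracting a chromatically recurrent finite set $S_0\subseteq \tilde{H}(\bm\alpha;2k+1,\varepsilon)\cap(E'-E')$ via Lemma \ref{lem:TildeHproperties}(iii), and then invoking a separate fact (Lemma \ref{lem:QuotientNonrecurrence}) that $S_0/m_k$ inherits $\delta$-nonrecurrence. None of these three moves appears in your proposal; the ``quantitative matching problem'' you defer to the final paragraph is precisely this, and it does not evaporate --- it is where the work is.

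Two further inaccuracies are worth flagging. First, Lemma \ref{lem:InfiniteDense} does not assert that $E-E$ contains scaled or translated copies of arbitrary finite templates; it asserts only that $E\bm\alpha$ can be made dense in $\mathbb T^d$, and the template lands in $E-E$ only after the Cayley-graph transfer of Lemma \ref{lem:CopyCayley}, which chooses one element of $E$ near each vertex of a finite subgraph of $\Cay(U)$ in $\mathbb T^d$ and takes differences. Second, the construction is not probabilistic: the pieces are Hamming balls, their non-density-recurrence is a binomial-coefficient computation, and their chromatic recurrence rests on Lov\'asz's lower bound for chromatic numbers of Kneser graphs (Theorem \ref{thm:Lovasz}), not on a pigeonhole or positive-probability argument.
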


\begin{remark} In terms of dynamical systems, Theorem \ref{thm:Kriz} may be stated as ``there is a set of topological recurrence which is not a set of measurable recurrence.''  Likewise,  Theorem \ref{thm:KrizInSminusS} says that if $E\subseteq \mathbb Z$ is infinite, then there is subset of $E-E$ which is a set of topological recurrence and not a set of measurable recurrence; see \cite{BergelsonMcCutcheon} for a general discussion of the equivalences between various recurrence properties.  Since our constructions do not explicitly reference dynamical systems, we only use the terms ``chromatic recurrence'' and ``density recurrence.''
\end{remark}

%
%

  There are several proofs of Theorem \ref{thm:Kriz} (\cite{Ruzsa85},\cite{ForrestThesis},\cite{McCutcheon},\cite{McCutcheonBook},\cite{WeissBook}), all of which follow the same broad outline as \cite{Kriz} and overcome the essential difficulties in a similar way.  Hopefully our exposition will help readers solve some related open problems, or  find a fundamentally different approach to Theorems \ref{thm:Kriz} and \ref{thm:KrizInSminusS}.
  Our approach in this article is mathematically nearly identical to Kriz's original construction in \cite{Kriz}, except that our approach to Lemma \ref{lem:2Pieces} is closer to an argument in Ruzsa's version of the construction \cite{Ruzsa85} (also presented in \cite{McCutcheon, McCutcheonBook}).

\begin{remark}
It may seem unnatural to use upper density when studying subsets of $\mathbb Z$, since $\bar{d}(-\mathbb N)=0$.  Intuitively $\bar{D}(A):=\limsup_{n\to\infty} \frac{|A\cap \{-n,\dots,n\}|}{2n+1}$ is a more natural notion of density for subsets of $\mathbb Z$. However, upper asymptotic density is more convenient for the proof of Lemma \ref{lem:2Pieces}.  This suggests that we should work in $\mathbb N$ rather than in $\mathbb Z$; we work in $\mathbb Z$ because a key step in our proofs will use group homomorphisms from $\mathbb Z$ into other groups.
\end{remark}

\subsection{Outline of the article}\label{sec:outline}

Our proof of Theorems \ref{thm:Kriz} and \ref{thm:KrizInSminusS} consists of two independent steps.

\medskip

\noindent \textbf{Step I.} Identify finite subsets of $\mathbb Z$ which approximate, in a precise sense, the property of being chromatically recurrent and not density recurrent.  This is split into two substeps:

\begin{enumerate}
\item[\textbf{(A)}] Find subsets of a finite group $G$ approximating  the property of being chromatically recurrent and not density recurrent.

\item[\textbf{(B)}] Copy these sets from $G$ into $\mathbb Z$ in a way that maintains their recurrence properties.
\end{enumerate}

\noindent \textbf{Step II.} Piece together the finite sets found in Step I in a way that maintains their recurrence properties.

\medskip

In \S\ref{sec:Model} we prove a version of Theorem \ref{thm:Kriz} for groups of the form $(\mathbb Z/2\mathbb Z)^d$. This carries out Step I(A) of the outline and introduces one of the key ideas in a clearer setting.

In \S\ref{sec:Proofs} we state and prove Lemma \ref{lem:2Pieces}, completing Step II of the outline.  We then state the other main lemmas, Lemmas \ref{lem:FinitePiecesExist} and \ref{lem:PiecesInSminusS}, and prove Theorems \ref{thm:Kriz} and \ref{thm:KrizInSminusS}.  The latter two lemmas are proved in \S\ref{sec:BH}, carrying out Part I(B) of the outline.

The proofs of Theorems \ref{thm:Kriz}  and \ref{thm:KrizInSminusS} are mostly elementary. The only highly nontrivial result required, for both proofs, is a lower bound for chromatic numbers of Kneser graphs, discussed in \S\ref{sec:CayleyKneser}.  Theorem \ref{thm:KrizInSminusS} uses some standard results on subsets of $\mathbb T^d$ of the form $\{n\bm\alpha:n\in E\}$ for arbitrary infinite sets $E\subseteq \mathbb Z$; we provide proofs in \S\ref{sec:UD} for completeness.  We also use, without proof, the elementary binomial estimate $\lim_{n\to \infty} \binom{n}{\lfloor n/2\rfloor}2^{-n}=0.$

Section \ref{sec:Questions} contains some remarks and open problems.
%

%
%
%

%
%

\section{A model setting}\label{sec:Model}
While Theorems \ref{thm:Kriz} and \ref{thm:KrizInSminusS} are about sets of integers, one of our main arguments is easier to develop in finite vector spaces over the field with two elements. These results in this section will be used as building blocks for the main construction in \S\S\ref{sec:Proofs}-\ref{sec:CopyCayley}. Forrest takes a similar approach in \cite{ForrestThesis} and in \cite{Forrest}.  

\subsection{Finite approximations to recurrence}

Let $\Gamma$ be a finite abelian group and $S\subseteq \Gamma$.  We say that $S$ is
\begin{enumerate}
    \item[$\bullet$] \emph{$\delta$-density recurrent} if $(A-A)\cap S\neq \varnothing$ for all $A\subseteq \Gamma$ having $|A|>\delta |\Gamma|$;
  \item[$\bullet$] \emph{$\delta$-nonrecurrent} if there exists $A\subseteq \Gamma$ having $|A|>\delta |\Gamma|$ such that $(A-A)\cap S=\varnothing$;
  \item[$\bullet$] \emph{$k$-chromatically recurrent} if for every partition of $\Gamma$ into $k$ sets $A_1, A_2,\dots, A_k$, the intersection $(A_j-A_j)\cap S$ is nonempty for some $j\leq k$.
\end{enumerate}
Note that if $0_\Gamma\in S$, then $S$ is both $k$-chromatically recurrent for every $k$ and  $\delta$-density recurrent for all $\delta\geq 0$, as $0_\Gamma \in A-A$ for every nonempty $A$.  However, our interest is only in sets $S$ not containing $0_\Gamma$.

If $d\in \mathbb N$, we write $\mathbb F_2^d$ for the group $(\mathbb Z/2\mathbb Z)^d$, the product of $d$ copies of $\mathbb Z/2\mathbb Z$.  Elements of $\mathbb F_2^d$ will be written as $\mb x = (x_1,\dots,x_d)$, where $x_j\in \{0,1\}$.  The remainder of this section is dedicated to proving the following analogue of Theorem \ref{thm:Kriz}.  
\begin{theorem}\label{thm:KrizF2}
  Let $\delta<\frac{1}{2}$ and $k\in \mathbb N$.  For all sufficiently large $d$, there exists $S\subseteq \mathbb F_2^d$ such that $S$ is $k$-chromatically recurrent and $\delta$-nonrecurrent.
\end{theorem}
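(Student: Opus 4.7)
The plan is to construct $S$ as a single Hamming-weight layer in $\mathbb{F}_2^d$, exploiting the fact that the induced subgraph of the Cayley graph $\Cay(\mathbb{F}_2^d,S)$ on an appropriate vertex layer is a Kneser graph. Concretely, writing $|x|$ for the Hamming weight of $x\in\mathbb{F}_2^d$, I would fix a parameter $w$ (depending on $k$ and $d$) and set
$$S \;:=\; \{x\in\mathbb{F}_2^d : |x|=2w\}.$$
The candidate large set witnessing $\delta$-nonrecurrence is the low-weight ball $A:=\{x\in\mathbb{F}_2^d : |x|<w\}$.

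The $\delta$-nonrecurrence via $A$ is immediate: for $x,y\in A$ the Hamming triangle inequality gives $|x+y|\le|x|+|y|<2w$, so $x+y\notin S$ and $(A-A)\cap S=\varnothing$. For $k$-chromatic recurrence, the key observation is that the subgraph of $\Cay(\mathbb{F}_2^d,S)$ induced on the weight-$w$ vertices is isomorphic to the Kneser graph $K(d,w)$: identifying each weight-$w$ vector $x$ with its support $\supp(x)\subseteq\{1,\dots,d\}$, two such vectors $x,y$ are adjacent in the Cayley graph iff $|x+y|=2w$, which happens iff $\supp(x)\cap\supp(y)=\varnothing$. Since any proper coloring of $\Cay(\mathbb{F}_2^d,S)$ restricts to a proper coloring of this induced Kneser subgraph,
$$\chi(\Cay(\mathbb{F}_2^d,S)) \;\ge\; \chi(K(d,w)) \;=\; d-2w+2$$
by the Lov\'asz--Kneser theorem. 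Hence the Cayley graph admits no proper $k$-coloring, i.e., $S$ is $k$-chromatically recurrent, whenever $d-2w+2>k$.

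Choosing $w:=\lfloor(d-k)/2\rfloor$ gives $d-2w+2\ge k+2$, securing the chromatic bound, and keeps $|d/2-w|\le(k+2)/2$. Then
$$|A| \;=\; \sum_{j=0}^{w-1}\binom{d}{j} \;\ge\; 2^{d-1} - C_k\binom{d}{\lfloor d/2\rfloor}$$
for a constant $C_k$ depending only on $k$ (accounting for the few terms between $w$ and $d/2$, together with the central binomial coefficient itself). The binomial estimate $\binom{d}{\lfloor d/2\rfloor}2^{-d}\to 0$ singled out in the outline then forces $|A|/2^d\to 1/2$ as $d\to\infty$, so $|A|>\delta\cdot 2^d$ for all sufficiently large $d$ since $\delta<1/2$. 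The only non-routine input to this plan is the Lov\'asz lower bound $\chi(K(d,w))\ge d-2w+2$; this is the main obstacle, and the paper treats it in its dedicated Kneser section. Everything else reduces to the elementary weight inequality for $A$ and the binomial estimate.
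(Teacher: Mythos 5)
Your proposal is correct and follows essentially the same route as the paper: a high-weight niveau set for $S$, a low-weight Hamming ball as the witness of $\delta$-nonrecurrence (via the triangle inequality for Hamming weight and the estimate $\binom{d}{\lfloor d/2\rfloor}2^{-d}\to 0$), and a Kneser subgraph of $\Cay(S)$ obtained from characteristic vectors together with the Lov\'asz--Kneser bound for chromatic recurrence. The only difference is cosmetic: you take $S$ to be a single weight layer $\{|x|=2w\}$ with $2w\approx d-k$ and place the Kneser vertices at weight $w$, whereas the paper takes $S=H_{2k+1}(\mb 1)$ (all weights $\geq d-2k-1$) and places the Kneser vertices at weight $\lfloor d/2\rfloor-k$; both parametrizations yield the same conclusion.
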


The next subsection summarizes background for the proof of Theorem \ref{thm:KrizF2}.

\subsection{Cayley graphs and Kneser graphs}\label{sec:CayleyKneser} We adopt the usual terminology from graph theory:  a \emph{graph} $\mathcal G$ is a set $V$, whose elements are called \emph{vertices}, together with a set $E$ of unordered pairs of elements of $V$, called \emph{edges}.  A \emph{vertex coloring of} $\mathcal G$ with $k$ colors (briefly, a $k$-coloring) is a function $f:V\to \{1,\dots,k\}$.  We say that $f$ is \emph{proper} if $f(v_1)\neq f(v_2)$ for every edge $\{v_1,v_2\}\in E$.  The \emph{chromatic number} of $\mathcal G$ is the smallest $k$ such that there is a proper $k$-coloring of $G$.

Let $r\leq n\in \mathbb N$. The \emph{Kneser graph} $KG(n,r)$ is the graph whose vertices are the $r$-element subsets of $\{1,\dots,n\}$, with two vertices $A, B\subseteq \{1,\dots,n\}$  joined by an edge if and only if $A\cap B=\varnothing$.  Note that $KG(n,r)$ has no edges if $2r> n$.  M.~Kneser proved that the chromatic number of $KG(n,r)$ is no greater than $n-2r+2$, and   conjectured that this is the correct value when $2r\leq n$. Lov\'asz proved Kneser's conjecture in \cite{Lovasz}, and B\'{a}r\'{a}ny \cite{Barany} provided a more elementary proof.  An even shorter proof was provided by Greene in \cite{GreeneKneserShort}. Matousek's book \cite{Matousek} gives a detailed exposition of these proofs and subsequent developments of the techniques they introduced.
\begin{theorem}[\cite{Lovasz}]\label{thm:Lovasz}
  If $2r\leq n\in \mathbb N$, then the chromatic number of $KG(n,r)$ is $n-2r+2$.
\end{theorem}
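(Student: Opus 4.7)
The plan is to prove both directions separately. The upper bound $\chi(KG(n,r)) \leq n-2r+2$ is a quick combinatorial exercise, while the lower bound $\chi(KG(n,r)) \geq n-2r+2$ is the substantive content of Lov\'asz's theorem and requires a topological input, namely the Borsuk--Ulam theorem.

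For the upper bound I would exhibit Kneser's explicit proper coloring with $n-2r+2$ colors: define $c: \binom{[n]}{r} \to \{1, 2, \ldots, n-2r+2\}$ by $c(F) = \min(F)$ whenever $\min(F) \leq n-2r+1$, and $c(F) = n-2r+2$ otherwise. Two $r$-subsets receiving a common color $j \leq n-2r+1$ both contain $j$ and so are not disjoint; two receiving the color $n-2r+2$ both lie inside $\{n-2r+2, \ldots, n\}$, a set of only $2r-1$ elements, so they must intersect by pigeonhole. In either case they are non-adjacent in $KG(n,r)$.

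For the lower bound I would follow Lov\'asz's topological approach. Set $d = n-2r+1$ and place points $v_1,\ldots,v_n$ on the sphere $S^d \subseteq \mathbb R^{d+1}$ in general position, meaning that no $d+1$ of them lie on any great subsphere. Suppose for contradiction that a proper coloring $c: \binom{[n]}{r} \to \{1,\ldots,d\}$ exists. For each color $i \leq d$ let
\[
U_i \;=\; \bigcup_{c(F)=i}\ \bigcap_{j\in F} \{x \in S^d : \langle x, v_j\rangle > 0\},
\]
an open subset of $S^d$; equivalently, $x \in U_i$ iff some $F$ with $c(F)=i$ has all $v_j$ ($j\in F$) in the open hemisphere $H(x)=\{y:\langle x,y\rangle>0\}$. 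Let $C = S^d \setminus \bigcup_{i=1}^d U_i$, a closed set. Then $U_1,\ldots,U_d,C$ cover $S^d$ with $d+1$ sets, so by the Lusternik--Schnirelmann--Borsuk theorem one of them contains an antipodal pair $\{x,-x\}$. If some $U_i$ does, one extracts $F_1,F_2 \in \binom{[n]}{r}$ with $c(F_1)=c(F_2)=i$, with $\{v_j : j\in F_1\}\subseteq H(x)$ and $\{v_j : j \in F_2\} \subseteq H(-x)$; since $H(x)\cap H(-x) = \varnothing$, the sets $F_1,F_2$ are disjoint, contradicting properness. If instead $\{x,-x\}\subseteq C$, then neither $H(x)$ nor $H(-x)$ contains $r$ of the $v_j$, so at least $n - 2(r-1) = d+1$ of the $v_j$ lie on the great subsphere $\{y:\langle x,y\rangle = 0\}$, violating general position.

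The main obstacle is of course the topological input: Borsuk--Ulam (equivalently the Lusternik--Schnirelmann covering statement) is nontrivial, and every known proof of the lower bound relies on some such ingredient. The combinatorial care is in arranging the points in general position and verifying that the $U_i$ are genuinely open so the covering theorem applies; the antipodal case analysis then unfolds as above.
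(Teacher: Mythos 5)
Your argument is correct, but note that the paper does not prove Theorem \ref{thm:Lovasz} at all: it is imported as a black box, with the surrounding discussion pointing to Lov\'asz's original topological proof, B\'ar\'any's simplification, and Greene's short proof. What you have written is essentially Greene's proof (the one cited as \cite{GreeneKneserShort}): the upper bound via the coloring $F\mapsto\min(\min(F),\,n-2r+2)$ is the standard Kneser construction and is verified correctly, and the lower bound via points in general position on $S^{d}$ with $d=n-2r+1$, the open sets $U_i$, and the antipodal dichotomy is exactly Greene's argument. The one point worth flagging is your appeal to ``the Lusternik--Schnirelmann--Borsuk theorem'' for a cover by $d$ open sets together with one closed set: the classical statement assumes all $d+1$ sets are open or all are closed, so you need the mixed version. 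This is a genuine (if mild) strengthening, but it follows directly from Borsuk--Ulam: the map $g\colon S^d\to\mathbb R^d$ with $g_i(x)=\operatorname{dist}(x,S^d\setminus U_i)$ has an antipodal coincidence $g(x)=g(-x)$; a strictly positive common coordinate puts $x,-x$ in some $U_i$, and if all coordinates vanish then $x,-x\in C$. With that supplied, your proof is complete modulo Borsuk--Ulam itself, which (as you and the paper both observe) is the unavoidable nonelementary ingredient; the paper deliberately sidesteps it by citation, since all it ultimately needs is the weaker Corollary \ref{cor:ChromaticLowerBound}.
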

Examination of our proofs shows that we need only the following corollary of Theorem \ref{thm:Lovasz}.
\begin{corollary}\label{cor:ChromaticLowerBound}
  If $n_k\to\infty $, $r_k\to \infty$, and $r_k^2/n_k\to 0$, then the chromatic number of $KG(2n_k+r_k,n_k)$ tends to $\infty$.
\end{corollary}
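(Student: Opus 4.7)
The plan is essentially a one-line application of Theorem \ref{thm:Lovasz}. In the notation of that theorem, I would set $n = 2n_k + r_k$ and $r = n_k$. The hypothesis $2r \leq n$ of the theorem becomes $2n_k \leq 2n_k + r_k$, which holds trivially for any nonnegative $r_k$. Theorem \ref{thm:Lovasz} then yields the chromatic number of $KG(2n_k + r_k, n_k)$ exactly: it equals $n - 2r + 2 = (2n_k + r_k) - 2n_k + 2 = r_k + 2$.

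Since $r_k \to \infty$ by hypothesis, the chromatic number $r_k + 2$ tends to infinity, proving the corollary. There is no real obstacle here; the derivation is a direct specialization of Lov\'asz's theorem.

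It is worth noting that among the three hypotheses listed ($n_k \to \infty$, $r_k \to \infty$, $r_k^2/n_k \to 0$), only $r_k \to \infty$ is actually used in the argument above. The other two conditions presumably appear because this corollary is invoked later in the paper in contexts where controlling the ratio $r_k^2/n_k$ or the growth of $n_k$ is needed for separate reasons (for instance, to handle density estimates for subsets of $\mathbb F_2^d$ associated with the vertices and edges of $KG(2n_k+r_k, n_k)$). Stating the corollary with the stronger hypotheses is therefore a matter of packaging, so the user of the corollary in later sections need not verify extra conditions.
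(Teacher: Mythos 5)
Your proof is correct and is exactly the intended (immediate) deduction from Theorem \ref{thm:Lovasz}: with $n=2n_k+r_k$ and $r=n_k$ the chromatic number is $r_k+2\to\infty$. Your aside about the unused hypotheses is essentially right, though the paper's motive is the reverse of ``packaging for later use'': the corollary is deliberately stated in this weaker, more restrictive form to record the \emph{minimal} lower bound actually needed, so that a future elementary proof of just this special case (avoiding the full strength of Lov\'asz's theorem) would suffice for the recurrence applications.
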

Surprisingly, there are no known proofs of nontrivial lower bounds for the chromatic number of $KG(n,r)$, besides those that prove Theorem \ref{thm:Lovasz}.  A substantially different proof of Corollary \ref{cor:ChromaticLowerBound} could lead to progress on problems in recurrence (cf. Remark \ref{rem:AvoidKneser}).

Given an abelian group $\Gamma$ and a subset $S\subseteq \Gamma$, the \emph{Cayley graph based on $S$}, denoted $\Cay(S)$, is the graph whose vertex set is $\Gamma$, with two vertices $x,y$ joined by an edge if $x-y\in S$ or $y-x\in S$.  It follows immediately from the definitions that $S$ is $k$-chromatically recurrent if and only if the chromatic number of $\Cay(S)$ is strictly greater than $k$.  To prove $S$ is $k$-chromatically recurrent, it therefore suffices to prove that the chromatic number of $\Cay(S)$ is at least $k+1$.

Note that a Cayley graph contains loops (i.e.~edges of the form $\{x\}$) if and only if $0_\Gamma\in S$.  As mentioned at the beginning of this section, we are only interested in recurrence properties of sets $S$ not containing $0_\Gamma$, so the Cayley graphs we consider will have no loops.

\subsection{Hamming balls in \texorpdfstring{$\mathbb F_2^d$}{F2d}}
 For $\mb x=(x_1,\dots,x_d)\in \mathbb F_2^d$, we define
\[
w(\mb x) := |\{j\leq d: x_j\neq 0\}|.
\]
Given $k\leq d$ and $\mb y\in \mathbb F_2^d$, the \emph{Hamming ball of radius $k$ around $\mb y$} is
\[
H_k(\mb y):=\{\mb x\in \mathbb F_2^d:w(\mb y-\mb x)\leq k\}.
\]
So $H_k(\mb y)$ is the set of $\mb x = (x_1,\dots, x_d)$ such that $x_j\neq y_j$ for at most $k$ coordinates $j$.

Let $\mb 0=(0,\dots, 0)$ and $\mb 1=(1,\dots,1)\in \mathbb F_2^d$.  The following identities are easy to verify from the definitions:
\begin{equation}\label{eqn:HkHr}
  H_k(\mb 0)-H_k(\mb 0) = H_{2k}(\mb 0),
\end{equation}
\begin{equation}\label{eqn:AntipodalH}
  H_{k}(\mb 0)\cap H_{r}(\mb 1) = \varnothing \text{ iff } k + r < d.
\end{equation}

\begin{lemma}\label{lem:HNonrecurrent}
  Let $k\in \mathbb N$ and $\delta<\frac{1}{2}$.  If $d\in \mathbb N$ is sufficiently large, then $H_k(\mb 1)\subseteq \mathbb F_2^d$ is $\delta$-nonrecurrent.
\end{lemma}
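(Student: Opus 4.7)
The plan is to take $A := H_r(\mathbf 0)$ for a carefully chosen radius $r$ depending on $d$ and $k$, and then verify both required properties using the identities (\ref{eqn:HkHr}) and (\ref{eqn:AntipodalH}) together with the binomial estimate $\binom{n}{\lfloor n/2\rfloor}2^{-n}\to 0$.

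First I would choose $r=r(d,k):=\lfloor (d-k-1)/2\rfloor$. This is designed so that $2r+k<d$ for all $d>k+1$. By (\ref{eqn:HkHr}), $A-A=H_{2r}(\mb 0)$, and then by (\ref{eqn:AntipodalH}), the inequality $2r+k<d$ gives $(A-A)\cap H_k(\mb 1)=H_{2r}(\mb 0)\cap H_k(\mb 1)=\varnothing$. So $A$ avoids $H_k(\mb 1)$ in its difference set, regardless of $d$ (so long as $d>k+1$).

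Next I would show $|A|>\delta\cdot 2^d$ for all sufficiently large $d$. Writing $|H_r(\mb 0)|=\sum_{j=0}^r\binom{d}{j}$ and using the symmetry $\binom{d}{j}=\binom{d}{d-j}$, one sees that $\sum_{j=0}^{\lfloor (d-1)/2\rfloor}\binom{d}{j}\geq 2^{d-1}$. Since $\lfloor (d-1)/2\rfloor - r \leq \lceil k/2\rceil +1$, the missing terms between $r$ and $\lfloor (d-1)/2\rfloor$ number at most a constant depending on $k$, and each is bounded by $\binom{d}{\lfloor d/2\rfloor}$. Hence
\[
|H_r(\mb 0)|\geq 2^{d-1}-C(k)\binom{d}{\lfloor d/2\rfloor},
\]
so $|H_r(\mb 0)|\cdot 2^{-d}\geq \tfrac12 - C(k)\binom{d}{\lfloor d/2\rfloor}2^{-d}$. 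The advertised binomial estimate makes the error term vanish as $d\to\infty$, and since $\delta<\tfrac12$, the bound $|H_r(\mb 0)|>\delta\cdot 2^d$ holds for all sufficiently large $d$.

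Combining the two steps exhibits the required witness $A=H_r(\mb 0)$, certifying that $H_k(\mb 1)$ is $\delta$-nonrecurrent. I do not anticipate any serious obstacle: the whole argument is essentially a bookkeeping exercise once one has the right choice of $r$, and the $\lim$ of binomial coefficients divided by $2^n$ is cited explicitly in the outline. The only mild subtlety is making sure the rounding in the definition of $r$ does not cause $A$ to lose more than a negligible fraction of $\mathbb F_2^d$, which is exactly what the binomial estimate handles.
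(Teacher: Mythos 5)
Your proposal is correct and is essentially the paper's own proof: both take $A$ to be a Hamming ball $H_r(\mb 0)$ with $r$ just below $d/2$ (the paper uses $r=\lfloor d/2\rfloor-k$), check disjointness of $A-A$ from $H_k(\mb 1)$ via (\ref{eqn:HkHr}) and (\ref{eqn:AntipodalH}), and get $|A|>\delta 2^d$ from the central binomial estimate. One trivial caveat: for even $d$ the sum $\sum_{j=0}^{\lfloor (d-1)/2\rfloor}\binom{d}{j}$ equals $2^{d-1}-\tfrac12\binom{d}{d/2}$ rather than being $\geq 2^{d-1}$, but this extra error term is absorbed into your $C(k)\binom{d}{\lfloor d/2\rfloor}$ correction, so the conclusion is unaffected.
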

We will use the following well known estimates on binomial coefficients:
setting  $M_d:=\max_{0\leq j \leq d} \binom{d}{j}$, we have  $\lim_{d\to\infty} M_d/2^d=0$.  The identities $\binom{d}{j} = \binom{d}{d-j}$ and $\sum_{j=0}^d \binom{d}{j}=2^d$ then imply that for fixed $k\in \mathbb N$ and $\delta<\frac{1}{2}$, we have the following for all large enough $d$:
\begin{equation}\label{eqn:Binom}
\sum_{j=0}^{\lfloor d/2\rfloor-k} \binom{d}{j}>\delta 2^d.
\end{equation}
\begin{proof}[Proof of Lemma \ref{lem:HNonrecurrent}]
  Fix $k\in \mathbb N$ and $\delta< \frac{1}{2}$.  Choose $d$ to be sufficiently large that (\ref{eqn:Binom}) holds.  Let $A=H_{\lfloor d/2\rfloor-k}(\mb 0)$, so that $A$ is the set of elements $(x_1,\dots,x_d)\in \mathbb F_2^d$ having at most $\lfloor d/2\rfloor-k$ entries equal to $1$.  Now $|A|$ is given by the sum in (\ref{eqn:Binom}), so $|A|>\delta |\mathbb F_2^d|$.  Equation (\ref{eqn:HkHr}) implies $A-A=H_{2(\lfloor d/2 \rfloor-k)}(\mb 0)$, which is disjoint from $H_k(\mb 1)$ by (\ref{eqn:AntipodalH}). So we have shown that $H_k(\mb 1)$ is $\delta$-nonrecurrent. \end{proof}

\begin{lemma}\label{lem:HContainsKneser}
  Let $k, d\in \mathbb N$ with $2k\leq d$.  The Cayley graph $\Cay(H_{2k+1}(\mb 1))$ in $\mathbb F_2^d$ contains a subgraph isomorphic to $KG(d,\lfloor d/2\rfloor-k)$.  Consequently, $H_{2k+1}(\mb 1)$ is $2k$-chromatically recurrent.
\end{lemma}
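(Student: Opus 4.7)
The plan is to exhibit an explicit graph embedding of $KG(d,r)$ into $\Cay(H_{2k+1}(\mb 1))$, where $r := \lfloor d/2\rfloor - k$. The natural candidate sends an $r$-element set $A \subseteq \{1,\dots,d\}$ to its indicator vector $\mb 1_A \in \mathbb F_2^d$, which has Hamming weight exactly $r$; this map is clearly injective, so it remains only to check that disjoint pairs of $r$-sets correspond to edges of the Cayley graph.

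To verify the edge condition, suppose $A$ and $B$ are disjoint $r$-subsets of $\{1,\dots,d\}$, so that $\{A,B\}$ is an edge of $KG(d,r)$. Since we are working in characteristic $2$, we have $\mb 1_A - \mb 1_B = \mb 1_A + \mb 1_B = \mb 1_{A\cup B}$, a vector of weight $2r$. Its Hamming distance from $\mb 1$ is therefore $d - 2r = d - 2\lfloor d/2 \rfloor + 2k$, which equals $2k$ if $d$ is even and $2k+1$ if $d$ is odd. In either case the difference lies in $H_{2k+1}(\mb 1)$, so $\mb 1_A$ and $\mb 1_B$ are joined by an edge of $\Cay(H_{2k+1}(\mb 1))$. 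This establishes the claimed subgraph inclusion.

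For the recurrence consequence, note that $2r = 2\lfloor d/2\rfloor - 2k \leq d$, so Theorem \ref{thm:Lovasz} applies and yields chromatic number $d - 2r + 2 \geq 2k + 2$ for $KG(d,r)$. Since the chromatic number of a graph is at least that of any subgraph, $\Cay(H_{2k+1}(\mb 1))$ has chromatic number at least $2k+2$, hence strictly greater than $2k$. By the equivalence recorded in \S\ref{sec:CayleyKneser}, this is exactly the statement that $H_{2k+1}(\mb 1)$ is $2k$-chromatically recurrent.

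There is no substantive obstacle here; the argument is essentially a one-line encoding together with a parity check, with Lov\'asz's theorem supplying all the nontrivial content. The one delicate point worth flagging is the choice of radius $2k+1$ rather than $2k$ in the definition of $S$: the extra unit of slack is used precisely when $d$ is odd, where the distance $d - 2r$ bumps up to $2k+1$; a radius of $2k$ would not yield the required subgraph inclusion in that case.
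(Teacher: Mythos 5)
Your proof is correct and follows essentially the same route as the paper: embed $KG(d,\lfloor d/2\rfloor-k)$ into the Cayley graph by sending each $r$-set to its indicator vector, check that disjoint sets yield differences of weight $2r$ and hence Hamming distance $d-2r\leq 2k+1$ from $\mb 1$, then invoke Theorem \ref{thm:Lovasz}. Your parity analysis of $d-2r$ and the remark on why radius $2k+1$ (rather than $2k$) is needed are slightly more explicit than the paper's, but the argument is the same.
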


\begin{proof}
Let $\mathcal G$ denote the Cayley graph $\Cay(H_{2k+1}(\mb 1))$ and let $r=\lfloor d/2\rfloor-k$.  To each $C\subseteq \{1,\dots,d\}$ having cardinality $r$, we will assign an element $\mb x_C\in \mathbb F_2^d$ (without repetition), and we will show that if such $C, C'$ are disjoint (meaning they are joined by an edge in $KG(d,r)$), then $\mb x_C-\mb x_{C'} \in  H_{2k+1}(\mb 1)$  (meaning  $\mb x_C$ and $\mb x_{C'}$ are joined by an edge in $\mathcal G$).  The set of $\mb x_C$ so chosen will thereby determine a subgraph of $\Cay(H_{2k+1}(\mb 1))$ isomorphic to $KG(d,r)$.

For each $C\subseteq \{1,\dots,d\}$ having $|C|=r$, we let $\mb x_C:=1_C$.  This is the characteristic function of $C$, viewed as an element of $\mathbb F_2^d$, so $\mb x_C$ has exactly $r$ entries equal to $1$.  Now if $C\cap C'=\varnothing$, then $\mb x_C-\mb x_{C'}$ has exactly $2r$ entries equal to $1$. Since $d-2r\leq 2k+1$, this means $\mb x_C-\mb x_{C'}\in H_{2k+1}(\mb 1)$, so that $\mb x_C$ and $\mb x_{C'}$ are joined by an edge of $\mathcal G$.

According to Theorem \ref{thm:Lovasz}, the chromatic number of $KG(d,r)$ is  $d-2r+2$.  With $r=\lfloor d/2\rfloor-k$, we get $d-2r+2\geq 2k+1$, so $\mathcal G$ has chromatic number at least $2k+1$, which implies that $H_{2k+1}(\mb 1)$ is $2k$-chromatically recurrent.
\end{proof}

\begin{proof}[Proof of Theorem \ref{thm:KrizF2}]
Fix $\delta\in (0,\frac{1}{2})$ and $k\in \mathbb N$.  Combining Lemmas \ref{lem:HNonrecurrent} and \ref{lem:HContainsKneser}, we get that for all sufficiently large $d$, the Hamming ball $H_{2k+1}(\mb 1)\subseteq \mathbb F_2^d$ is $\delta$-nonrecurrent and $k$-chromatically recurrent.
\end{proof}

%
%


\section{Proof of Theorems \ref{thm:Kriz} and \ref{thm:KrizInSminusS}}\label{sec:Proofs}

\subsection{Assembling finite pieces}
We now return to $\mathbb Z$ and prove Theorems \ref{thm:Kriz} and \ref{thm:KrizInSminusS}.  These theorems involve sets which are not density recurrent, but do have some other recurrence property.  Lemma \ref{lem:2Pieces} provides a general procedure for building such sets from finite pieces.  To make this idea precise, we need the following definitions.

Let $S\subseteq \mathbb Z$.  We say that $S$ is
\begin{enumerate}
  \item[$\bullet$] \emph{$\delta$-density recurrent} if $(A-A)\cap S\neq \varnothing$ for all $A\subseteq \mathbb Z$ having $\bar{d}(A)>\delta$.
  \item[$\bullet$] \emph{$\delta$-nonrecurrent} if there exists $A\subseteq \mathbb Z$ having $\bar{d}(A)>\delta$ such that $(A-A)\cap S=\varnothing$.  In other words, $S$ is not $\delta$-density recurrent.
  \item[$\bullet$] \emph{$k$-chromatically recurrent} if for every partition of $\mathbb Z$ into $k$ sets $A_1, A_2,\dots, A_k$, the intersection $(A_j-A_j)\cap S$ is nonempty for some $j\leq k$.

      Equivalently, $S$ is $k$-chromatically recurrent if for every function $f:\mathbb Z\to \{1,\dots,k\}$, there exist $a, b\in \mathbb Z$ such that $f(a)=f(b)$ and $b-a\in S$.
\end{enumerate}

\begin{remark}
  Note that $(A-A)\cap S=\varnothing$ if and only if $A\cap (A+S)=\varnothing$.  This equivalence will be used from time to time without comment.  Likewise we will use the following trivial observations, true for all $A, B\subseteq \mathbb Z$ and $t\in \mathbb Z$:
\begin{enumerate}
  \item[$\bullet$] $(A-t)-(A-t) = A-A$;
  \item[$\bullet$] $(A\cap B) -t = (A-t)\cap (B-t)$.
\end{enumerate}

\end{remark}

If $m\in \mathbb N$, we write $[m]$ for the interval $\{0,1,\dots, m-1\}$ in $\mathbb Z$.

\begin{definition} If $m\in \mathbb N$ and $B\subseteq [ farkm]$, we say that $(B,m)$ \emph{witnesses the $\delta$-nonrecurrence of $S$} if $|B|>\delta m$ and $B\cap (B+S)=\varnothing$,  $B+S\subseteq [m]$, and $B+S+S\subseteq [m]$.
\end{definition}
Note that if there is an $m\in \mathbb N$ and $B\subset [m]$ such that $(B,m)$ witnesses the $\delta$-nonrecurrence of $S$, then $S$ is $\delta$-nonrecurrent: the set $B':=B+m\mathbb Z$ has density $d(B')=\frac{1}{m}|B|>\delta$, and the conditions $B, B+S\subset [m]$ imply $B'\cap (B'+S) = B\cap (B+S) + m\mathbb Z$.  Since $B\cap (B+S)=\varnothing$, we have $B'\cap (B'+S)=\varnothing$.

The condition $B+S+S\subseteq [m]$ may seem unmotivated, but it will be used in the proof of Lemma \ref{lem:2Pieces}.

\begin{lemma}\label{lem:IntersectionDensity}
If $A\subseteq \mathbb N$ and $m\in \mathbb N$, then there is a $t\in \mathbb Z$ such that $|A\cap ([m]+t)|\geq \bar{d}(A)m$.
\end{lemma}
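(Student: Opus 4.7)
The plan is to use a double-counting (averaging) argument and then exploit the fact that the quantity $|A\cap([m]+t)|$ takes only integer values in $\{0,1,\dots,m\}$, so its supremum over $t\in\mathbb Z$ is actually attained. Let $\delta:=\bar d(A)$ and let $M:=\sup_{t\in\mathbb Z}|A\cap([m]+t)|$, which is a maximum since the supremum is over a bounded set of nonnegative integers. The goal is to show $M\ge \delta m$.

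For any large $N\in\mathbb N$, I would average $|A\cap([m]+t)|$ over $t\in\{0,1,\dots,N-1\}$. Counting pairs $(a,t)$ with $a\in A$ and $t\le a\le t+m-1$, any $a\in A\cap[m-1,N-1]$ is counted by exactly $m$ choices of $t$, so
\[
\sum_{t=0}^{N-1}|A\cap([m]+t)|\ \ge\ m\,|A\cap[m-1,N-1]|\ \ge\ m\,|A\cap[1,N-1]|-m(m-1).
\]
On the other hand, each summand is at most $M$, so the left side is at most $NM$. Rearranging,
\[
M\ \ge\ \frac{m\,|A\cap[1,N-1]|}{N}-\frac{m(m-1)}{N}.
\]

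Taking $\limsup$ as $N\to\infty$, the first term on the right has $\limsup$ equal to $m\,\bar d(A)=m\delta$ (by the definition of upper density), while the second term vanishes. Hence $M\ge m\delta$, and choosing $t$ that attains $M$ completes the proof.

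I do not anticipate any real obstacle here; the only mild subtlety is that we need $M$ to be attained, which is why we exploit the integer-valuedness of $|A\cap([m]+t)|$ rather than merely obtaining a supremum, and why we can conclude $\ge \delta m$ even when $\delta m$ is not itself an integer.
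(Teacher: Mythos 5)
Your argument is correct, and the endgame (the supremum of $|A\cap([m]+t)|$ over $t$ is attained because this quantity takes only finitely many integer values) is exactly the paper's. The route to the inequality $M\ge \bar d(A)m$ is different, though. You run a sliding-window double count: summing $|A\cap([m]+t)|$ over $t\in\{0,\dots,N-1\}$, each element of $A$ lying well inside $\{0,\dots,N-1\}$ is counted exactly $m$ times, so the average window count is at least $m\,|A\cap[N]|/N$ up to an error of size $O(m^2/N)$, and the maximum dominates the average. The paper instead partitions $[N]$ into $\lfloor N/m\rfloor$ disjoint length-$m$ blocks plus a remainder, bounds $|A\cap[N]|$ from above by $\lfloor N/m\rfloor$ times the maximal window count plus $m$, and deduces $\bar d(A)\le \sup_t |A\cap([m]+t)|/m$. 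These are dual forms of the same averaging principle: you lower-bound the maximal window count directly, while the paper upper-bounds the density by it. Neither has a real advantage for this lemma; the disjoint tiling avoids the boundary bookkeeping your overlapping windows require, whereas your double-counting version is the one that adapts more readily to situations where no exact tiling is available. Your handling of the $\limsup$ (discarding the vanishing error term and using that $M$ dominates every term of the sequence, hence its $\limsup$) is sound.
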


\begin{proof}
Let $m\in \mathbb N$ and let
\[\delta := \sup_{t\in \mathbb Z} \frac{|A\cap([m]+t)|}{m},\] so that $|A\cap ([m]+t)|\leq \delta m$ for every $t\in \mathbb Z$.   Given $N\in \mathbb N$, write $[N]$ as a union of $\lfloor N/m \rfloor$ disjoint intervals $I_1,\dots, I_{\lfloor N/m \rfloor}$ of length $m$, together with another (possibly empty) interval $I_{\lfloor N/m\rfloor+1}$ of length at most $m$. Then $|A\cap I_j|\leq \delta m$ for every $j$, so
\[|A\cap [N]|\leq \sum_{j=1}^{\lfloor N/m\rfloor + 1} |A\cap I_j| \leq  \lfloor N/m \rfloor \delta m  + m \leq \delta N + m.\]  Then $\limsup_{N\to \infty} \frac{|A\cap [N]|}{N} \leq \delta$, meaning $\bar{d}(A)\leq \delta$.  Since there are only finitely many possible values of $\frac{|A\cap ([m]+t)|}{m}$, the supremum is attained.
\end{proof}

\begin{lemma}\label{lem:WitnessesExist}
  If $S\subseteq \mathbb Z$ is finite and $\delta$-nonrecurrent, then for all sufficiently large $m$, there is a set $B\subseteq [m]$ such that $(B,m)$ witnesses the $\delta$-nonrecurrence of $S$.
\end{lemma}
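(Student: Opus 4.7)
The plan is to carve $B$ out of the infinite $\delta$-nonrecurrent set guaranteed by hypothesis. First I would fix $A \subseteq \mathbb Z$ with $\bar d(A) =: \delta' > \delta$ and $A \cap (A+S) = \varnothing$; replacing $A$ with $A \cap \mathbb N$ preserves both properties (since $A\cap\mathbb N\subseteq A$), so I may assume $A \subseteq \mathbb N$ in order to apply Lemma \ref{lem:IntersectionDensity}. Since $S$ is finite, set $N := \max_{s \in S}|s|$, so that $S \cup (S+S) \subseteq [-2N, 2N]$.

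The main idea is to apply Lemma \ref{lem:IntersectionDensity} not to an interval of length $m$ but to a slightly shorter one, and then shift the resulting piece of $A$ into the middle of $[m]$ so that there is room for $B + S + S$ to fit. Concretely, given large $m$, set $m' := m - 4N$ and use the lemma to find $t \in \mathbb Z$ with $|A \cap ([m'] + t)| \ge \delta' m'$. Then define
\[ B := \bigl(A \cap ([m']+t)\bigr) - t + 2N, \]
which sits inside $[2N,\, m - 2N - 1] \subseteq [m]$ with a buffer of width $2N$ on each side, so both $B + S$ and $B + S + S$ automatically land in $[m]$.

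For the remaining witnessing conditions, the cardinality bound $|B| \ge \delta'(m - 4N)$ exceeds $\delta m$ as soon as $m > 4\delta' N / (\delta' - \delta)$. Writing $A' := A \cap ([m']+t)$, the translation identities in the remark after the definition give $B \cap (B+S) = (A' \cap (A'+S)) - t + 2N$, and since $A' \subseteq A$ implies $A' \cap (A' + S) \subseteq A \cap (A+S) = \varnothing$, the disjointness condition $B \cap (B+S) = \varnothing$ follows. I do not foresee any substantive obstacle; the only delicate point is the bookkeeping of the buffer, which is precisely what forces the $4N$ correction in the choice of $m'$ rather than simply using $m$ itself.
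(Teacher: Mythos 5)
Your proof is correct and follows essentially the same route as the paper's: apply Lemma \ref{lem:IntersectionDensity} to locate a window where $A$ has density at least $\bar d(A)$, translate that piece into $[m]$, and sacrifice $O(N)$ elements to make room for $B+S+S$. The only difference is cosmetic — the paper trims $2k$ off the right end of $[m]$, while you shrink the window to $m-4N$ and pad both sides, which has the minor advantage of also handling negative elements of $S$ cleanly.
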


\begin{proof}
  Assuming $\delta>0$ and that $S$ is finite and $\delta$-nonrecurrent, there is  set $A\subseteq \mathbb Z$ such that $\bar{d}(A)>\delta$ and $(A-A)\cap S=\varnothing$.  Fix such an $A$. Let $k=\max\{|n|:n\in S\}$, and choose $m_0$ sufficiently large that $\bar{d}(A)m-2k > \delta m$ whenever $m>m_0$.

   Let $m>m_0$.  We will find a set $B\subseteq [m]$ such that $(B,m)$ witnesses the $\delta$-nonrecurrence of $S$.  By Lemma \ref{lem:IntersectionDensity}, choose $t\in \mathbb Z$ so that $|A\cap ([m]+t)|\geq \bar{d}(A)m$.  Then $|(A-t)\cap [m]|\geq \bar{d}(A)m$.  Let $B= (A-t)\cap [m-2k]$, so that $|B|\geq \bar{d}(A)m-2k$.  Then $|B|> \delta m$, by our choice of $m_0$.  Since $B$ is contained in a translate of $A$, we have $B-B\subseteq A-A$, meaning $B-B$ is disjoint from $S$.  The containment $B\subseteq [m-2k]$ implies $B+S, B+S+S\subseteq [m]$.
\end{proof}

The next lemma  
will be used in conjunction with Lemma \ref{lem:2Pieces} below.  If $S\subseteq \mathbb Z$ and $m\in \mathbb N$,  we write $mS$ for $\{mn: n\in S\}$.

\begin{lemma}\label{lem:DilateChromatic}
  Let $k,m\in \mathbb N$. If $S\subseteq \mathbb Z$ is $k$-chromatically recurrent then so is $mS$.
\end{lemma}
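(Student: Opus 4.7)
The plan is to reduce $k$-chromatic recurrence of $mS$ to that of $S$ by pulling back any $k$-coloring of $\mathbb Z$ along the multiplication-by-$m$ map. Concretely, given an arbitrary function $f:\mathbb Z\to\{1,\dots,k\}$, I would form the composition $g:\mathbb Z\to\{1,\dots,k\}$ defined by $g(n):=f(mn)$. This is simply $f$ restricted (via the isomorphism $n\mapsto mn$) to the subgroup $m\mathbb Z$, recolored as a coloring of $\mathbb Z$.

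Next I would apply the hypothesis that $S$ is $k$-chromatically recurrent to $g$. This produces integers $a,b\in\mathbb Z$ with $g(a)=g(b)$ and $b-a\in S$. Unwinding the definition of $g$, this means $f(ma)=f(mb)$, while $mb-ma=m(b-a)\in mS$. Setting $a':=ma$ and $b':=mb$ exhibits a monochromatic pair for $f$ whose difference lies in $mS$, as required.

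There isn't really a substantive obstacle here; the only thing to verify is that the definitions match up, namely that the equivalent characterization of $k$-chromatic recurrence as ``every $k$-coloring of $\mathbb Z$ admits a monochromatic pair with difference in $S$'' is exactly what we applied to $g$, and that $m(b-a)\in mS$ by the definition of $mS=\{mn:n\in S\}$. I expect the write-up to be just a few lines once the pullback coloring $g(n)=f(mn)$ is named.
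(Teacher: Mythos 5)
Your proposal is correct and is essentially identical to the paper's proof: the paper also defines the pullback coloring $\tilde f(n):=f(mn)$, applies the $k$-chromatic recurrence of $S$ to it, and unwinds to get $f(ma)=f(mb)$ with $mb-ma\in mS$. Nothing to add.
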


\begin{proof}
  Suppose $m\in \mathbb N$, $S$ is $k$-chromatically recurrent, and let $f:\mathbb Z\to \{1,\dots,k\}$.  Form a new coloring $\tilde{f}:\mathbb Z\to \{1,\dots,k\}$ by $\tilde{f}(n)=f(mn)$.  Since $S$ is $k$-chromatically recurrent, there exists $a,b\in \mathbb Z$ such that $b-a\in S$ and $\tilde{f}(a)=\tilde{f}(b)$.  This means $f(ma)=f(mb)$, and $mb-ma\in mS$.  Since $f$ was an arbitrary function into $\{1,\dots,k\}$, this shows that $mS$ is $k$-chromatically recurrent. \end{proof}


%

Here is the first key lemma for our constructions, a variant of Lemma 3.2 in \cite{Kriz}.  We continue to use $[m]$ to denote $\{0,\dots,m-1\}$.

\begin{lemma}\label{lem:2Pieces}
  Let $\delta,\eta\in (0,\frac{1}{2})$, and let $E, F\subseteq \mathbb N$ be finite sets which are $\delta$-nonrecurrent and $\eta$-nonrecurrent, respectively.  If $(A,m)$ witnesses the $\delta$-nonrecurrence of $E$, then for all sufficiently large $l\in \mathbb N$, there exists $C\subseteq [lm]$ with $A\subseteq C$ such that $(C,lm)$ witnesses the $2\delta\eta$-nonrecurrence of $E\cup mF$.
Consequently, for all sufficiently large $m$, $E\cup mF$ is $2\delta\eta$-nonrecurrent.
\end{lemma}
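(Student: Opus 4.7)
My plan has three stages. First, I use the $\eta$-nonrecurrence of $F$ to obtain, for all sufficiently large $l$, a set $B\subseteq[l]$ with $(B,l)$ witnessing the $\eta$-nonrecurrence of $F$ (by Lemma~\ref{lem:WitnessesExist}); after translating I may assume $0\in B$. Second, I define the draft set $C_0:=A+mB\subseteq[lm]$, which automatically contains $A$, and verify the avoidance and containment conditions. For an $E$-difference $(a_1+mb_1)-(a_2+mb_2)=e$, the buffer $A+E+E\subseteq[m]$ gives $\max A\le m-1-2\max E$, so $|e-(a_1-a_2)|<m$, which forces $b_1=b_2$ and hence $e\in A-A$, contradicting $(A-A)\cap E=\varnothing$. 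For an $mF$-difference, $m$-divisibility together with $|a_1-a_2|<m$ forces $a_1=a_2$, reducing to $b_1-b_2\in F\cap(B-B)=\varnothing$. The containments $C_0+(E\cup mF)+(E\cup mF)\subseteq[lm]$ follow term-by-term from $A+E+E\subseteq[m]$ and $B+F+F\subseteq[l]$.

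Third, and this is where the real work lies: $|C_0|=|A|\cdot|B|$ exceeds only $\delta\eta\cdot lm$, which is a factor of two short of what the lemma demands. I plan to close this gap by a doubling device. The natural attempt is to thicken each block: exploiting the buffer $A\subseteq[0,m-1-2\max E]$, find a shift $s$ for which $A$ and $A+s$ are disjoint subsets of $[m]$ and $A\cup(A+s)$ still avoids $E$-differences (equivalently, $s\notin(A-A)\cup\bigl((A-A)\pm E\bigr)$), and then take $C:=(A\cup(A+s))+mB$. This would give $|C|=2|A|\cdot|B|>2\delta\eta\cdot lm$, and the avoidance conditions for $C$ would reduce to those for $C_0$ together with the condition on $s$, while the containment conditions are preserved because $s$ can be taken no larger than $2\max E$.

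The main obstacle is guaranteeing that a good shift $s$ actually exists: the bad set has size on the order of $|E|\cdot|A-A|$, which may be comparable to or exceed the in-block buffer $2\max E$ when $|A-A|$ is close to $m$. If the naive single in-block shift fails, the fallback — and, I expect, the actual content of Ruzsa's argument that gives the lemma its name — is a \emph{two-piece} construction at the scale of $[lm]$: take two disjoint translates of the pattern $A+mB$ separated by a carefully chosen offset $\Delta\in[lm]$, with $\Delta$ selected to avoid $(A-A)+m(B-B)\pm(E\cup mF)$, and use the hypothesis $\eta<\tfrac12$ to guarantee enough room. Once the witness $(C,lm)$ of the $2\delta\eta$-nonrecurrence of $E\cup mF$ is constructed, the final sentence of the lemma is routine: periodizing $C$ by $lm\mathbb Z$ produces a subset of $\mathbb Z$ of upper density $|C|/(lm)>2\delta\eta$ whose difference set avoids $E\cup mF$, where the $C+(E\cup mF)+(E\cup mF)\subseteq[lm]$ containment is exactly what prevents the periodization from creating wrap-around differences.
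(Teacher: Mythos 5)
Your setup matches the paper's: obtain $B\subseteq[l]$ witnessing the $\eta$-nonrecurrence of $F$ via Lemma \ref{lem:WitnessesExist}, form the base pattern $A+mB$, verify its avoidance and containment properties by the base-$m$ decomposition, and then try to double it. You have also correctly located the crux (the factor of $2$) and correctly written down the condition a doubling offset $\Delta$ must satisfy, namely $\Delta\notin\bigl((A-A)+m(B-B)\bigr)\pm(E\cup mF\cup\{0\})$. But the proof is not complete: you never produce such a $\Delta$, and neither of your two proposed routes to one succeeds. The in-block shift $s\in[1,2\max E]$ provably fails in general, as you suspected: $(A-A)\pm E$ already contains $E$ itself (take $0\in A-A$), and if, say, $A$ is an arithmetic progression of common difference $3$ and $E$ meets every residue class mod $3$, then $(A-A)\pm E$ covers the entire buffer range, leaving no admissible $s$. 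The global-offset fallback is the right idea, but ``use $\eta<\tfrac12$ to guarantee enough room'' is not an argument: the bad set is a union of roughly $2|E|+2|F|+1$ translates of $(A-A)+m(B-B)$, and since $|A-A|$ and $|B-B|$ can be comparable to $2m$ and $2l$ respectively, its cardinality can far exceed $lm$, so no counting or pigeonhole argument can produce $\Delta$. (Making $\eta$ larger only makes $B-B$ bigger and the bad set worse.)

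The missing idea is that $\Delta$ can be written down explicitly: take any $e_0\in E$, $f_0\in F$ and set $\Delta=e_0+mf_0$, i.e.\ take the second copy to be $A+e_0+m(B+f_0)$, intersecting the union with $[lm-2k]$ where $k=\max(E\cup mF)$ to restore the containment conditions (this costs at most $2k$ elements, absorbed by taking $|A|=\delta'm$ with $\delta'>\delta$ and $l$ large). The point is that the product structure lets the two witnesses kill the two cases separately: every element of $C$ has a unique representation $a+qe_0+m(b+qf_0)$ with $a\in A$, $b\in B$, $q\in\{0,1\}$, because $a+qe_0\in[m]$ and $b+qf_0\in[l]$. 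If a difference of two such elements lies in $E$, comparing the $[l]$-components forces $b-b'=(q-q')f_0$, and since $\pm f_0\notin B-B$ this gives $q=q'$, whence the $[m]$-components yield $a-a'\in E$, contradicting $(A-A)\cap E=\varnothing$; the $mF$ case is symmetric, using $(A-A)\cap E=\varnothing$ to force $q=q'$ and then contradicting $(B-B)\cap F=\varnothing$. In particular $\Delta=e_0+mf_0$ is verified to avoid your bad set not by size considerations but by this two-coordinate case analysis. Without this (or an equivalent) choice of $\Delta$, the doubling step, and hence the lemma, remains unproved.
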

In practice we will apply Lemma \ref{lem:2Pieces} with $\eta$ close to $\frac{1}{2}$, so that the quantity $2\delta\eta$ in the conclusion will be close to $\delta$ in the hypothesis.

\begin{proof}
Let $E$, $F$, $A$, and $m$ be as in the hypothesis of the lemma.  Since $(A,m)$ witnesses the $\delta$-nonrecurrence of $E$, we have $A\subseteq [m]$ and  $|A|>\delta m$.  Write $|A|$ as $\delta' m$, so that $\delta'>\delta$. Let $k=\max(E\cup mF)$, and choose $l_0$ so that for all $l>l_0$, we have \begin{equation}\label{eqn:LargerDelta}2\delta'\eta lm-2k> 2\delta\eta lm.
\end{equation}
Let $l\in \mathbb N$ be greater than $l_0$ and large enough (by Lemma \ref{lem:WitnessesExist}) that there is a set $B\subseteq [l]$ such that $(B,l)$ witnesses the $\eta$-nonrecurrence of $F$;  fix such a $B$.  We will form $C$ as a union of translates of $A$.  Each such translate $A+t$ will lie in one of the mutually disjoint intervals
\[
I_0=[0, m-1], I_1=[m, 2m-1], \dots, I_{l-1}=[(l-1)m,lm-1],
\] and will be arranged so that if $A+t\subseteq I_j$, then $A+t+E\subseteq I_j$.   First fix arbitrary elements $e_0\in E$ and $f_0\in F$. Let
  \[C_1:= \bigcup_{b\in B} A+mb, \qquad C_2:=\bigcup_{b\in B} A+e_0+m(b+f_0), \qquad C:=(C_1\cup C_2)\cap [lm-2k].\]
 We claim that every element of $C$ can be written uniquely as
 \begin{equation}\label{eqn:represent}
 a+qe_0 + m(b+qf_0), \qquad \text{where } a\in A, b\in B, q\in \{0,1\}.
 \end{equation}  The existence of such a representation is evident from the definition of $C$.   To prove uniqueness, assume that $a,a'\in A$, $b,b'\in B$,  $q,q'\in \{0,1\}$, and
  \begin{equation}
  \label{eqn:TwoSuch} a+qe_0 + m(b+qf_0) = a'+q'e_0 + m(b'+q'f_0),
  \end{equation}
   with the aim of proving $a=a'$, $b=b'$, and $q=q'$. First observe that every element of $[ml]$ can be written uniquely as $s+mt$ with $s\in [m]$ and $t\in [l]$.  Then $a+qe_0, a'+q'e_0\in [m]$ and $b+qf_0, b'+q'f_0\in [l]$, so $a+qe_0=a'+q'e_0$, meaning $a-a' = (q'-q)e_0$.  This implies $a-a'\in  \{0\} \cup \pm E$, so the assumption $(A-A)\cap E = \varnothing$ implies $a=a'$, whence $q=q'$ as well. Equation (\ref{eqn:TwoSuch}) then simplifies to $a+qe_0 + m(b+qf_0) = a+qe_0 + m(b'+qf_0)$, implying $b=b'$.

 We will prove that
 \begin{align}
 \label{eqn:Cdensity} &|C|>2\delta\eta lm, \\
 \label{eqn:CEFinLM}  & C+(E\cup mF)+(E\cup mF)\subseteq [lm], \\
 \label{eqn:CCEFempty} & C\cap \bigl(C+(E\cup mF)\bigr)=\varnothing.
 \end{align}
The unique representation of elements of $C$ in (\ref{eqn:represent}) implies $|C|\geq 2|A||B|-2k$ (the subtraction accounts for the containment in $[lm-2k]$).   Our choice of $A$ and $B$ together with (\ref{eqn:LargerDelta}) implies $2|A||B|-2k >2\delta'\eta lm-2k    > 2\delta\eta lm$; combined with the lower bound on $|C|$ this proves (\ref{eqn:Cdensity}). The containment (\ref{eqn:CEFinLM}) follows from the containment $C\subseteq [lm-2k]$ and our choice of $k$.

We now prove (\ref{eqn:CCEFempty}) by  showing $C \cap (C+E)=\varnothing$ and  $C\cap (C+mF)=\varnothing$.  First assume, to get a contradiction, that $C\cap (C+E)\neq \varnothing$. Then there are $c, c'\in C$ and $e\in E$ such that $c=c'+e$.  Representing $c$ and $c'$ as in (\ref{eqn:represent}), we have
\begin{equation}\label{eqn:CCE}
a+qe_0 + m(b+qf_0) = a'+q'e_0 +e + m(b'+q'f_0)
\end{equation}
where $a,a'\in A$, $b,b'\in B$, and $q, q'\in \{0,1\}$.  Thus $a+qe_0$ and $ a'+q'e_0+e$ belong to $[m]$, while $b+qf_0$ and $b'+q'f_0$ belong to $[l]$, so (\ref{eqn:CCE}) and uniqueness of the representation in (\ref{eqn:represent}) implies
\begin{align} \label{eqn:parts1}
a+qe_0&=a'+q'e_0+e,\\
\label{eqn:parts2} b+qf_0&=b'+q'f_0.
\end{align}  Rewriting (\ref{eqn:parts2}), we get $b-b' = (q-q')f_0$.  This implies $q=q'$, since otherwise we get $b-b' = \pm f_0$, contradicting our assumption that $(B-B)\cap F=\varnothing$.  With $q=q'$ equation (\ref{eqn:parts1}) simplifies to $a = a'+e$.  This contradicts our assumption that $A \cap (A+E)=\varnothing$.

To prove that $C\cap (C+mF)=\varnothing$, we argue as above and arrive at the equations $a+qe_0=a'+q'e_0$ and $b+qf_0=b'+q'f_0+f$.  As above, the first equation implies $q=q'$, so $a=a'$, and we get that $b=b'+f$, which again contradicts our assumption that $B\cap (B+F)=\varnothing$.  This completes the proof of (\ref{eqn:CCEFempty}).

Since $C$ is a union of translates of $A$, we may replace $C$ with $C-\min(C)+\min(A)$ to get $A\subseteq C$ and maintain the inclusions $C\subseteq [lm]$ and (\ref{eqn:CEFinLM}). Together with (\ref{eqn:Cdensity}) and (\ref{eqn:CCEFempty}), this shows that $(C,lm)$ witnesses the $2\delta\eta$-nonrecurrence of $E\cup mF$.
\end{proof}

\subsection{Proof of Theorem \ref{thm:Kriz}} Here is our second key ingredient, which we prove in \S\ref{sec:PiecesProofs}.
\begin{lemma}\label{lem:FinitePiecesExist}
 For all $k\in \mathbb N$ and all $\delta\in (0,\frac{1}{2})$, there is a finite set $S\subseteq \mathbb Z$ such that $S$ is $k$-chromatically recurrent and $\delta$-nonrecurrent.
\end{lemma}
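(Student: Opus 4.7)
The plan is to deduce Lemma~\ref{lem:FinitePiecesExist} from its finite-group analogue Theorem~\ref{thm:KrizF2} by transferring the set $S'\subseteq\mathbb F_2^d$ produced there into a finite subset $S\subseteq\mathbb Z$. The transfer embeds $\mathbb F_2^d$ into $\mathbb T^d$ as $\{0,\tfrac12\}^d$ via $v\mapsto v/2$, and reads off the $\mathbb F_2^d$-structure from integer shifts via a Weyl-equidistributed orbit $(n\alpha)_{n\in\mathbb Z}$ in $\mathbb T^d$. This realizes Step~I(B) of \S\ref{sec:outline}.

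\textbf{Construction.} Given $k\in\mathbb N$ and $\delta\in(0,\tfrac12)$, fix $\delta'\in(\delta,\tfrac12)$ and apply Theorem~\ref{thm:KrizF2} to produce, for $d$ sufficiently large, a $k$-chromatically recurrent and $\delta'$-nonrecurrent $S'\subseteq\mathbb F_2^d$ with witness $A'\subseteq\mathbb F_2^d$ satisfying $|A'|>\delta'\,2^d$ and $(A'-A')\cap S'=\varnothing$. Choose $\alpha_1,\dots,\alpha_d\in\mathbb R$ with $\{1,\alpha_1,\dots,\alpha_d\}$ linearly independent over $\mathbb Q$, and write $n\alpha:=(n\alpha_1,\dots,n\alpha_d)\in\mathbb T^d$. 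Take $\beta<\tfrac14$ close enough to $\tfrac14$ that $|A'|(2\beta)^d>\delta$ (achievable for $d$ large since $\delta'>\delta$), a radius $\gamma\in(0,\tfrac12-2\beta)$, and a large $N\in\mathbb N$. Define
\[
S \;:=\; \bigl\{\,n\in[1,N]\,:\, n\alpha\in\textstyle\bigcup_{s\in S'}\bigl(s/2+(-\gamma,\gamma)^d\bigr)\bmod 1\,\bigr\}.
\]

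\textbf{Nonrecurrence.} Let $A:=\{n\in\mathbb Z:n\alpha\in\bigcup_{a'\in A'}(a'/2+(-\beta,\beta)^d)\}$. The balls $a'/2+(-\beta,\beta)^d$ are disjoint in $\mathbb T^d$ for $\beta<\tfrac14$, so by Weyl equidistribution $\bar d(A)=|A'|(2\beta)^d>\delta$. The containment $A-A\subseteq\{n:n\alpha\in(A'-A')/2+(-2\beta,2\beta)^d\}$, together with $(A'-A')\cap S'=\varnothing$ and the fact that distinct points of $\{0,\tfrac12\}^d$ are $\tfrac12$ apart in $\ell^\infty$, yields $(A-A)\cap S=\varnothing$ whenever $2\beta+\gamma<\tfrac12$. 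Hence $S$ is $\delta$-nonrecurrent.

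\textbf{Chromatic recurrence and main obstacle.} Given $f:\mathbb Z\to\{1,\dots,k\}$, for each $v\in\mathbb F_2^d$ take the Bohr fiber $F_v:=\{n:n\alpha\in v/2+(-\gamma/2,\gamma/2)^d\}$, which has $\bar d(F_v)=\gamma^d$. Pigeonhole gives a color $c(v)\in\{1,\dots,k\}$ with $\bar d(F_v\cap f^{-1}(c(v)))\geq\gamma^d/k$. Applying the $k$-chromatic recurrence of $S'$ to the coloring $c:\mathbb F_2^d\to\{1,\dots,k\}$ yields $v,v'$ with $c(v)=c(v')$ and $s:=v'-v\in S'$. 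Writing $X:=F_v\cap f^{-1}(c(v))$ and $Y:=F_{v'}\cap f^{-1}(c(v'))$, subtraction gives $Y-X\subseteq\{n:n\alpha\in s/2+(-\gamma,\gamma)^d\}$, so \emph{any} element of $(Y-X)\cap[1,N]$ already lies in $S$. \emph{The main obstacle} is producing such an element: since $X$ and $Y$ have only positive upper (not lower) density, showing $(Y-X)\cap[1,N]\neq\varnothing$ for the pre-chosen $N$ requires combining Weyl well-distribution of the fibers $F_v,F_{v'}$ with a careful counting/averaging argument (or, equivalently, with standard Bohr-recurrence machinery for positive-density sets). This density-boosting step is the technical heart of the proof, and should go through provided $N$ is chosen large enough in advance depending on $d$, $k$, $\beta$, and $\gamma$.
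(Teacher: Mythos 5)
Your overall strategy (transfer the $\mathbb F_2^d$ example into $\mathbb Z$ through $\mathbb T^d$ via an equidistributed orbit) is exactly the paper's Step I(B), and your nonrecurrence half is essentially correct: it is the content of Lemmas \ref{lem:BoxIntersections}, \ref{lem:BoxDensity}, \ref{lem:LiftNonrecurrence1} and \ref{lem:LiftNonrecurrence2}. The chromatic half, however, has a genuine gap, and it sits precisely where you flagged it. After the global upper-density pigeonhole, $X=F_v\cap f^{-1}(c(v))$ and $Y=F_{v'}\cap f^{-1}(c(v'))$ are merely two sets of positive \emph{upper} density, and two such sets need not have any difference in a prescribed window $[1,N]$: they can live in interleaved, exponentially separated blocks (e.g.\ $X\subseteq\bigcup_j[4^{2j},2\cdot4^{2j}]$, $Y\subseteq\bigcup_j[4^{2j+1},2\cdot4^{2j+1}]$). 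The Bohr structure of the fibers does not rescue this, because the adversarial coloring $f$ is chosen \emph{before} you run the pigeonhole and may be correlated simultaneously with the block structure and with the fibers, so that every pair $(v,v')$ your argument can output has $X$ and $Y$ supported on disjoint block systems. ``Positive upper density plus well-distributed ambient fiber'' is not enough; some form of localization is required, and no amount of averaging appended after the global pigeonhole supplies it. (A correct repair along your lines: fix $N$ so large that every fiber $F_v$ has $\geq\tfrac12\gamma^dN$ elements in \emph{every} interval of length $N$ -- this is uniform by Weyl discrepancy -- then run the pigeonhole inside a single window $[M_0,M_0+N]$ to define $c$; the resulting $X,Y$ are then nonempty subsets of one common window, and the sign issue is handled by the symmetry $s/2=-s/2$ in $\mathbb T^d$.)

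The paper avoids the issue entirely by transferring a \emph{finite witness} of chromatic recurrence rather than the property itself. Chromatic recurrence of $S'$ is equivalent to $\Cay(S')$ having chromatic number $>k$, which is a statement about one finite graph; Lemma \ref{lem:CopyCayley} copies that finite graph into $\mathbb Z$ by choosing a \emph{single} integer $g_v$ per vertex $v$ with $g_v\bm\alpha$ within a small neighborhood of $v/2$ (possible since the orbit is dense), so that $v-v'\in U$ forces $g_v-g_{v'}\in\rho^{-1}(U)$. One then colors $\mathbb F_2^d$ by $c(v):=f(g_v)$ -- no fibers, no densities, no pigeonhole -- and the finite set of differences $g_v-g_{v'}$ is the desired $k$-chromatically recurrent set, which is $\delta$-nonrecurrent because it is a subset of the $\delta$-nonrecurrent set $\tilde H(\bm\alpha;2k+1,\varepsilon)$ from Part (i) of Lemma \ref{lem:TildeHproperties}. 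I recommend replacing your fiber construction with this one-representative-per-vertex argument: it is both simpler and what makes the later refinement to $S\subseteq E-E$ (Part (iii) of Lemma \ref{lem:TildeHproperties}) go through, since representatives can be drawn from any set whose orbit is dense.
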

We now prove Theorem \ref{thm:Kriz} by combining this lemma with Lemma \ref{lem:2Pieces}.  The proof will use no additional information about the sets provided by Lemma \ref{lem:FinitePiecesExist}.

\begin{proof}[Proof of Theorem \ref{thm:Kriz}] Let $\delta\in (0,\frac{1}{2})$. We will find a chromatically recurrent set $S$ and a set $C\subseteq \mathbb Z$ with $\bar{d}(C)\geq \delta$ such that $(C-C)\cap S=\varnothing$, meaning $S$ is not density recurrent.   To build $S$ and $C$, we will use Lemmas \ref{lem:2Pieces} and \ref{lem:FinitePiecesExist} to  find increasing sequences of sets $S_1\subseteq S_2\subseteq \dots$, $C_1\subseteq C_2\subseteq \dots$, and intervals $[m_1], [m_2],\dots$, with $m_k\to \infty$, so that the following conditions hold for all $k\in \mathbb N$:
\begin{enumerate}
\item[(i)] $S_k$ is $k$-chromatically recurrent,
\item[(ii)] $C_k\subseteq [m_k]$, $C_k+S_k\subseteq [m_k]$, $C_k+S_k+S_k\subseteq [m_k]$, and $|C_k|>\delta m_k$,
 \item[(iii)] $(C_k-C_k)\cap S_k=\varnothing$.
\end{enumerate}
Having constructed these, we let $C:=\bigcup_{k=1}^\infty C_k$ and $S:=\bigcup_{k=1}^\infty S_k$. Then $(C-C)\cap S=\varnothing$; otherwise for some $k\in \mathbb N$ we would have $c-c'=s$ for some $c, c'\in C_k$ and $s\in S_k$.  Item (ii)  implies $\bar{d}(C)\geq \delta$, as $\frac{|C_k\cap [m_k]|}{m_k}>\delta$ for every $k$.  Item (i) implies $S$ is chromatically recurrent, being $k$-chromatically recurrent for every $k\in \mathbb N$.

To find $S_k$ and $C_k$, we first choose $S_1=\{1\}$, $m_1=2$, and $C_1=\{0\}$, so that (i)-(iii) are trivially satisfied with $k=1$.

 To perform the inductive step, we assume the sets $S_k$, $C_k$, and the integer $m_k$ have been constructed to satisfy (i)-(iii), and choose $\delta_k>\delta$ so that $|C_k|>\delta_k m_k$.  Since $\delta_k>\delta$, we may choose $\eta<\frac{1}{2}$ so that $2\delta_k\eta>\delta$.   Lemma \ref{lem:FinitePiecesExist} provides a finite set $S'$ which is $(k+1)$-chromatically recurrent and $\eta$-nonrecurrent.  Apply Lemma \ref{lem:2Pieces} to find $l\geq 2$ and $C_{k+1}\subseteq [l m_k]$ such that $(C_{k+1}, l m_k)$ witnesses the $2\delta_k\eta$-nonrecurrence of $S_k\cup m_kS'$.  Finally, Lemma \ref{lem:DilateChromatic} implies $m_kS'$ is $(k+1)$-chromatically recurrent. Setting $m_{k+1}=l m_k$ and $S_{k+1}=S_k\cup m_kS'$, we get that (i)-(iii) are satisfied with $k+1$ in place of $k$.
\end{proof}

%
%
%

\subsection{Proof of Theorem \ref{thm:KrizInSminusS}}

The following modification of Lemma \ref{lem:FinitePiecesExist} is proved in \S\ref{sec:PiecesProofs}.

\begin{lemma}\label{lem:PiecesInSminusS}
Let $k, m\in \mathbb N$ and $\delta<\frac{1}{2}$. If $E\subseteq \mathbb Z$ is infinite, then there is a finite $\delta$-nonrecurrent set $S\subseteq \mathbb N$ such that $mS\subseteq E-E$ and $mS$ is $k$-chromatically recurrent.
\end{lemma}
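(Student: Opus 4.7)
The plan is to reduce the claim to a ``copying'' problem inside the difference set of an infinite subset of $\mathbb{Z}$, and then execute the copy using the $\mathbb{F}_2^d$ construction from Theorem~\ref{thm:KrizF2} together with equidistribution in $\mathbb{T}^d$. First I would pigeonhole on residues mod $m$: since $E$ is infinite, some class $r+m\mathbb{Z}$ contains infinitely many elements of $E$, and fixing any $e_0$ in this class and setting
\[
B := \{(e-e_0)/m : e\in E,\ e\equiv e_0\pmod{m}\},
\]
the set $B\subseteq\mathbb{Z}$ is infinite and $m(B-B)\subseteq E-E$. Hence $mS\subseteq E-E$ is guaranteed by $S\subseteq B-B$, and by Lemma~\ref{lem:DilateChromatic} the $k$-chromatic recurrence of $mS$ follows from that of $S$. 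The problem thereby reduces to: given an arbitrary infinite $B\subseteq\mathbb{Z}$, find a finite $\delta$-nonrecurrent, $k$-chromatically recurrent set $S\subseteq \mathbb{N}\cap(B-B)$.

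To construct such an $S$, fix $\delta'\in(\delta,\tfrac12)$ and invoke Theorem~\ref{thm:KrizF2} to obtain $d\in\mathbb{N}$ and $S^*\subseteq\mathbb{F}_2^d$ which is $k$-chromatically recurrent and $\delta'$-nonrecurrent, with some witness $A^*\subseteq\mathbb{F}_2^d$. Embed $\mathbb{F}_2^d$ into the $2$-torsion of $\mathbb{T}^d$ by $\iota(\mb{x})=\mb{x}/2$, and choose open neighborhoods $U\supseteq\iota(S^*)$ and $V\ni\mb{0}$ in $\mathbb{T}^d$ small enough that $U$ is disjoint from $\iota(A^*-A^*)+V-V$. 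Then apply the standard results of \S\ref{sec:UD}: for any infinite $B\subseteq\mathbb{Z}$ and any $d\in\mathbb{N}$, there exists $\bm{\alpha}\in\mathbb{T}^d$ for which $\{n\bm{\alpha}:n\in B\}$ is dense in $\mathbb{T}^d$. Fix such an $\bm{\alpha}$ and set
\[
S := \bigl\{\,n\in \mathbb{N}\cap(B-B)\cap[1,N] : n\bm{\alpha}\in U\,\bigr\}
\]
for $N$ sufficiently large.

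The containment $S\subseteq B-B$ and finiteness are immediate. The $\delta$-nonrecurrence should then follow by lifting $A^*$: the set $A:=\{n\in\mathbb{Z}: n\bm{\alpha}\in \iota(A^*)+V\}$ has $\bar d(A)$ close to $|A^*|/2^d>\delta$ by equidistribution of $n\bm{\alpha}$ on $\mathbb{Z}$, and the separation between $U$ and $\iota(A^*-A^*)+V-V$ forces $(A-A)\cap S=\varnothing$. The hard part, and the technical heart of the argument, is the $k$-chromatic recurrence of $S$: given a $k$-coloring $f$ of $\mathbb{Z}$, one must produce a monochromatic pair with difference in $S$. The plan is to use $\bm{\alpha}$ to push $f$ to a $k$-coloring of $\mathbb{F}_2^d$ (via a majority or Furstenberg-correspondence-style argument along the orbit), apply the $k$-chromatic recurrence of $S^*$ to obtain a monochromatic pair there, and lift the resulting pair back to an integer pair whose difference lies in $S$. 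Ensuring that this extraction and lifting are compatible with the constraint $n\in B-B$---so that the lifted difference lands in $S$ rather than merely in a Bohr thickening of $m(B-B)$---is the step I expect to be the most delicate.
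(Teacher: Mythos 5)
Your overall route is the paper's: pigeonhole $E$ into a residue class mod $m$, use Lemma \ref{lem:InfiniteDense} to choose $\bm\alpha\in\mathbb T^d$ with $B\bm\alpha$ dense, and lift the Hamming-ball example from $\mathbb F_2^d$ through $\mathbb T^d$ into $B-B$ (your reduction via Lemma \ref{lem:DilateChromatic} is in fact a slightly cleaner bookkeeping than the paper's use of $S_0/m$). However, the nonrecurrence step as you wrote it fails. If $V$ is a neighborhood of $\mb 0$ small enough that $U$ avoids $\iota(A^*-A^*)+V-V$, then $\mu(\iota(A^*)+V)\leq |A^*|\,\mu(V)$, which is tiny; by Lemma \ref{lem:BoxDensity} the set $A=\{n:n\bm\alpha\in\iota(A^*)+V\}$ then has density $|A^*|\mu(V)$, nowhere near $|A^*|/2^d>\delta$. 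To get density $\approx|A^*|/2^d$ you must thicken each point of $\iota(A^*)$ to a box of side length close to $\tfrac12$, as in the paper's $A^{\ssquare}_{\varepsilon}=A^*+[\varepsilon,\tfrac12-\varepsilon]^d$, whose measure is $|A^*|(\tfrac12-2\varepsilon)^d$; the tiling (\ref{eqn:Tile}) of $\mathbb T^d$ by the $2^d$ half-cubes is what lets Lemma \ref{lem:BoxIntersections} compute $A^{\ssquare}_{\varepsilon}\cap(A^{\ssquare}_{\varepsilon}+\mb t)$ exactly, and then one needs the compactness argument of Lemma \ref{lem:LiftNonrecurrence2} (not a naive ``choose $V$ small'') to separate the fat set $A^{\ssquare}_{\varepsilon}$ from $A^{\ssquare}_{\varepsilon}+\iota(S^*)+V_\varepsilon$.

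The chromatic-recurrence step you leave open is resolved in the opposite direction from the one you sketch: one does not push a coloring of $\mathbb Z$ down to $\mathbb F_2^d$ (a majority vote along the orbit has no reason to respect the difference structure), but rather embeds the finite Cayley graph upward. Choose neighborhoods $W'-W'\subseteq W$ with $\iota(S^*)+W\subseteq U$, and for each vertex $v\in G_d$ pick a representative $g_v\in B$ with $g_v\bm\alpha\in\iota(v)+W'$ (possible since $B\bm\alpha$ is dense). Then $v-v'\in \iota(S^*)$ forces $g_v-g_{v'}\in\rho^{-1}(U)\cap(B-B)$, so the finite set of these differences supports an isomorphic copy of $\Cay(H_{2k+1}(\mb 1))$, which contains a Kneser graph of chromatic number $\geq 2k+1$. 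Selecting the representatives from $B$ itself is precisely what guarantees the differences land in $B-B$ rather than in a Bohr thickening of it, which answers the compatibility worry you raise at the end; this is the content of Lemma \ref{lem:CopyCayley} and Lemma \ref{lem:TildeHproperties}(iii).
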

Theorem \ref{thm:KrizInSminusS} may be proved by following the proof of Theorem \ref{thm:Kriz} verbatim, with two modifications:

\begin{enumerate}

\item[$\bullet$] Let $S_1=\{t\}$ where $t\in E-E$ satisfies $t>(\frac{1}{2}-\delta)^{-1}$, let $C_1=[t-1]$, and let $m_1=2t$.  Then $|C_1|/m_1 = (t-1)/(2t) = \frac{1}{2}-\frac{1}{2t}>\delta$, and it is easy to check that (i)-(iii) are satisfied with $k=1$.

\item[$\bullet$] Cite Lemma \ref{lem:PiecesInSminusS} in place of Lemma \ref{lem:FinitePiecesExist}, noting in the inductive step that if $S_k \subseteq E - E$, then $S_{k+1}\subseteq E-E$.  \hfill $\square$
\end{enumerate}

%

\section{Chromatic recurrence and \tpdf{$\delta$}{delta}-nonrecurrence in \texorpdfstring{$\mathbb T^d$}{Td}}\label{sec:BH}

The remainder of this article contains the proofs of Lemmas \ref{lem:FinitePiecesExist} and \ref{lem:PiecesInSminusS}.  Very roughly, the proofs proceed by copying the Hamming balls $H_k(\mb 1)$ from $\mathbb F_2^d$ into $\mathbb Z$, passing through $\mathbb T^d$ as an intermediate step.  To be more precise, we fix some notation.

%


Let $\mathbb R$ denote the real numbers with the usual topology and let $\mathbb T$ denote $\mathbb R/\mathbb Z$ with the quotient topology.  For $x\in \mathbb T$, let $\tilde{x}$ be the unique element of $[0,1)$ such that $\tilde{x}+\mathbb Z = x$, and write $\|x\|$ for $\min_{n\in \mathbb Z}|\tilde{x}-n|$.

When defining subsets of $\mathbb T$, we identify subintervals of $\mathbb R$ with their images in $\mathbb T$ under the quotient map.

Let $G_d:=\{0,1/2\}^d\subseteq \mathbb T^d$, so that $G_d$ is a subgroup isomorphic to $\mathbb F_2^d$.  With the natural identification, $\mb 1\in \mathbb F_2^d$ is identified with $\mbhalf:=(1/2,\dots,1/2)\in \mathbb T^d$.  We will use the notation $H_k$ for Hamming balls around elements of $G_d$, so $H_k(\mbhalf)$ is, by definition, the set of $(x_1,\dots,x_d)\in G_d$ where at most $k$ entries $x_i$ are not equal to $1/2$.

Let $V_{\varepsilon}$ denote the open box $\{\mb x\in \mathbb T^d: \max \|x_j\|<\varepsilon\}$. For $\bm\alpha \in \mathbb T^d$, define
\begin{equation}\label{eqn:BHdef}\tilde{H}(\bm\alpha;k,\varepsilon):=\{n\in \mathbb Z: n\bm\alpha \in H_{k}(\mbhalf) + V_{\varepsilon}\}.
\end{equation}
We call $\tilde{H}(\bm\alpha;k,\varepsilon)$ the \emph{$\varepsilon$-copy of $H_k(\mb 1)$} determined by $\bm\alpha$.


The next lemma records the key properties of $\tilde{H}(\bm\alpha;k,\varepsilon)$.  When $\bm\alpha \in \mathbb T^d$ and $E\subseteq \mathbb Z$, we write $E\bm\alpha$ for $\{n\bm\alpha:n\in E\}$.
\begin{lemma}\label{lem:TildeHproperties}\,\begin{enumerate}
\item[(i)] Let $\delta<\frac{1}{2}$ and $k\in \mathbb N$. For all sufficiently large $d\in \mathbb N$ there is an $\varepsilon>0$ such that for all $\bm\alpha\in \mathbb T^d$, the set $\tilde{H}(\bm\alpha;k,\varepsilon)$ is $\delta$-nonrecurrent.

\item[(ii)]  If $\mathbb Z\bm\alpha$ is dense in $\mathbb T^d$ and $\varepsilon>0$, then there is a finite subset of $\tilde{H}(\bm\alpha;2k+1,\varepsilon)$ which is $2k$-chromatically recurrent.

\item[(iii)]  If $E\subseteq \mathbb Z$ is such that $E\bm\alpha$ is dense in $\mathbb T^d$ and $\varepsilon>0$, then there is a finite subset of $\tilde{H}(\bm\alpha;2k+1,\varepsilon)\cap (E-E)$ which is $2k$-chromatically recurrent.
    \end{enumerate}
\end{lemma}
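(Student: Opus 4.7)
The plan is to transfer the results from the finite group $G_d \cong \mathbb F_2^d$ to $\mathbb Z$ through the rotation $n \mapsto n\bm\alpha$ on $\mathbb T^d$. The key geometric fact I would exploit is that distinct points of $G_d$ differ by at least $1/2$ in some coordinate (sup-norm distance in $\mathbb T^d$), so $A_0 + V_\eta$ and $H_k(\mbhalf) + V_\varepsilon$ are disjoint in $\mathbb T^d$ whenever $A_0 \cap H_k(\mbhalf) = \varnothing$ in $G_d$ and $2\eta + \varepsilon < 1/2$.

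For part (i), I would apply Lemma~\ref{lem:HNonrecurrent} to obtain, for large $d$, a set $A_0 := H_{\lfloor d/2\rfloor - k}(\mb 0) \subseteq G_d$ with $|A_0| > \delta' 2^d$ for some $\delta' \in (\delta, \tfrac{1}{2})$ and $(A_0 - A_0) \cap H_k(\mbhalf) = \varnothing$. Thicken to $B := A_0 + V_\eta \subseteq \mathbb T^d$, with $\eta$ slightly below $1/4$ and $\varepsilon = \varepsilon(d) > 0$ small enough that $2\eta + \varepsilon < 1/2$ while $\mu(B) = |A_0|(2\eta)^d > \delta$; both can be arranged for sufficiently large $d$. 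The disjointness condition gives $(B - B) \cap (H_k(\mbhalf) + V_\varepsilon) = \varnothing$. The main obstacle is that the naive pullback $\{n : n\bm\alpha \in B\}$ has upper density $\mu_H(B \cap H)$ where $H = \overline{\mathbb Z\bm\alpha}$, which can be much smaller than $\mu(B)$ when $\bm\alpha$ has a degenerate orbit closure. I would resolve this by a Fubini-averaging step,
\[
\int_{\mathbb T^d} \mu_H\bigl((B + \bm t) \cap H\bigr)\, d\bm t = \mu(B),
\]
yielding some $\bm t \in \mathbb T^d$ with $\mu_H((B + \bm t) \cap H) \geq \mu(B) > \delta$. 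Setting $A := \{n \in \mathbb Z : n\bm\alpha \in B + \bm t\}$, Weyl equidistribution on $H$ applied to the open set $B + \bm t$ gives $\bar d(A) > \delta$, while the inclusion $(A - A)\bm\alpha \subseteq B - B$ forces $(A - A) \cap \tilde H(\bm\alpha; k, \varepsilon) = \varnothing$.

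For part (ii), I would reuse the Kneser embedding from Lemma~\ref{lem:HContainsKneser}. Setting $r := \lfloor d/2\rfloor - k$, the assignment $C \mapsto \mb 1_C \in G_d$ on $r$-subsets $C \subseteq \{1,\dots,d\}$ sends disjoint pairs $C_1, C_2$ to differences lying in $H_{2k+1}(\mbhalf)$. Using density of $\mathbb Z\bm\alpha$ in $\mathbb T^d$, pick distinct integers $n_C$ with $n_C\bm\alpha \in \mb 1_C + V_{\varepsilon/2}$, and define $S := \{n_{C_1} - n_{C_2} : C_1, C_2 \text{ are disjoint } r\text{-subsets}\}$. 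Then $(n_{C_1} - n_{C_2})\bm\alpha \in H_{2k+1}(\mbhalf) + V_\varepsilon$ for each such pair, so $S \subseteq \tilde H(\bm\alpha; 2k+1, \varepsilon)$, and the induced subgraph of $\Cay(S)$ on $\{n_C\}$ contains $KG(d, r)$ as a spanning subgraph. By Theorem~\ref{thm:Lovasz}, $\chi(\Cay(S)) \geq d - 2r + 2 \geq 2k + 2$, so $S$ is $2k$-chromatically recurrent.

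Part (iii) is a refinement of (ii): fix any $e_0 \in E$; since $E\bm\alpha$ is dense in $\mathbb T^d$, so is $(E - e_0)\bm\alpha$, so I can pick distinct $e_C \in E$ with $(e_C - e_0)\bm\alpha \in \mb 1_C + V_{\varepsilon/2}$. Setting $n_C := e_C - e_0 \in E - E$, the argument of (ii) applies unchanged, producing a finite $S \subseteq \tilde H(\bm\alpha; 2k+1, \varepsilon) \cap (E - E)$ that is $2k$-chromatically recurrent. The chief difficulty throughout the lemma is the shift-averaging step in part (i); parts (ii) and (iii) are essentially bookkeeping on top of the Kneser-graph construction from \S\ref{sec:Model}.
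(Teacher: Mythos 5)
Your proof is correct and, in outline, is the paper's own: for part (i) you thicken the extremal Hamming ball $H_{\lfloor d/2\rfloor-k}(\mb 0)\subseteq G_d$ into a disjoint union of boxes $B\subseteq\mathbb T^d$ of measure exceeding $\delta$, separate $B-B$ from $H_k(\mbhalf)+V_\varepsilon$ using the fact that distinct elements of $G_d$ differ by $1/2$ in some coordinate, and pull back along $n\mapsto n\bm\alpha$ by equidistribution (the paper packages this as Lemmas \ref{lem:BoxIntersections}, \ref{lem:BoxDensity}, \ref{lem:LiftNonrecurrence1} and \ref{lem:LiftNonrecurrence2}); for parts (ii) and (iii) you transport the Kneser subgraph of $\Cay(H_{2k+1}(\mbhalf))$ into $\Cay\bigl(\tilde{H}(\bm\alpha;2k+1,\varepsilon)\cap(E-E)\bigr)$ by choosing integers (resp.\ elements of $E$) whose orbit points approximate the vertices $1_C$, which is exactly Lemma \ref{lem:CopyCayley} specialized to $\rho(n)=n\bm\alpha$.

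The one genuine divergence is the Fubini averaging in part (i), and it is a real improvement rather than bookkeeping. The paper reaches Part (i) through Lemma \ref{lem:LiftNonrecurrence1}, which hypothesizes that $\mathbb Z\bm\alpha$ is dense in $\mathbb T^d$; so, as written, the paper's proof of Lemma \ref{lem:LiftNonrecurrence2} (and hence of Part (i)) only treats $\bm\alpha$ with dense orbit, although both statements quantify over all $\bm\alpha\in\mathbb T^d$. (This is harmless for the paper's applications, where $\bm\alpha$ is always chosen so that $E'\bm\alpha$, hence $\mathbb Z\bm\alpha$, is dense.) Your identity $\int_{\mathbb T^d}\mu_H\bigl((B+\mb t)\cap H\bigr)\,d\mb t=\mu(B)$ with $H=\overline{\mathbb Z\bm\alpha}$ produces a translate $B+\mb t$ meeting $H$ in $\mu_H$-measure greater than $\delta$, and since $(B+\mb t)-(B+\mb t)=B-B$ the disjointness argument survives translation; this proves the statement in its stated generality. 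Two small points you should spell out: that $(n\bm\alpha)_{n\in\mathbb N}$ is equidistributed in $H$ with respect to $\mu_H$ (Weyl's criterion applied to the characters of $H$, exactly as in the proof of Lemma \ref{lem:BoxDensity}), and that for an open $U\subseteq H$ equidistribution yields the lower bound $\liminf_{N\to\infty}\frac{1}{N}|\{n\leq N:n\bm\alpha\in U\}|\geq\mu_H(U)$, which is what converts $\mu_H((B+\mb t)\cap H)>\delta$ into $\bar d(A)>\delta$. The remaining details --- choosing $\eta=\eta(d)<\frac14$ with $(4\eta)^d>\delta/\delta'$, the condition $2\eta+\varepsilon<\frac12$ forcing $(B-B)\cap(H_k(\mbhalf)+V_\varepsilon)=\varnothing$, and the distinctness of the chosen integers in (ii) and (iii) --- all check out.
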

 Parts (ii) and (iii) will be proved in \S\ref{sec:CopyCayley}.  The remainder of this section is dedicated to the proof of Part (i).

Given $\varepsilon>0$, let $I_{\varepsilon}=[\varepsilon,\frac{1}{2}-\varepsilon]\subseteq \mathbb T$, and let $I_{\varepsilon}^d\subseteq \mathbb T^d$ be its $d$-fold cartesian power.  Observe that
\begin{equation}\label{eqn:Tile}
 \text{ the sets } \mb t + I_{\varepsilon}^d,\, \mb t\in G_d, \text{ are mutually disjoint.}
\end{equation}
Write $\mu$ for Haar probability measure on $\mathbb T^d$. Given $A\subseteq G_d$, let $A^{\ssquare}_{\varepsilon}:= A + I_{\varepsilon}^d$, so that (\ref{eqn:Tile}) implies
\begin{equation}\label{eqn:SquareMeasure}
\mu(A^{\ssquare}_{\varepsilon})=|A|\bigl(\tfrac{1}{2}-2\varepsilon\bigr)^d.
\end{equation}

\begin{lemma}\label{lem:BoxIntersections}
  If $\mb t \in G_d$ and $A \subseteq G_d$, then $A^{\ssquare}_{\varepsilon}\cap (A^{\ssquare}_{\varepsilon}+\mb t)=(A \cap (A+\mb t))^{\ssquare}_{\varepsilon}$.
\end{lemma}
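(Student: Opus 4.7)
The plan is to establish the two inclusions separately, with the forward direction being the only place where the disjointness property (\ref{eqn:Tile}) is essential.

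First I would dispose of the easy containment $\supseteq$. If $\mb s \in A \cap (A + \mb t)$, then $\mb s \in A$ gives $\mb s + I_\varepsilon^d \subseteq A + I_\varepsilon^d = A^{\ssquare}_\varepsilon$, and $\mb s \in A + \mb t$ gives $\mb s + I_\varepsilon^d \subseteq (A + \mb t) + I_\varepsilon^d = A^{\ssquare}_\varepsilon + \mb t$. Taking the union over such $\mb s$ yields $(A \cap (A + \mb t))^{\ssquare}_\varepsilon \subseteq A^{\ssquare}_\varepsilon \cap (A^{\ssquare}_\varepsilon + \mb t)$.

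For the reverse containment $\subseteq$, I would take an arbitrary $\mb x \in A^{\ssquare}_\varepsilon \cap (A^{\ssquare}_\varepsilon + \mb t)$ and write it in two ways: $\mb x = \mb a + \mb y$ with $\mb a \in A$ and $\mb y \in I_\varepsilon^d$, and $\mb x = \mb a' + \mb t + \mb y'$ with $\mb a' \in A$, $\mb y' \in I_\varepsilon^d$. Since $\mb t \in G_d$, both $\mb a$ and $\mb a' + \mb t$ lie in $G_d$. The key step is to invoke (\ref{eqn:Tile}): the translates of $I_\varepsilon^d$ by distinct elements of $G_d$ are disjoint, so the equality $\mb a + \mb y = (\mb a' + \mb t) + \mb y'$ forces $\mb a = \mb a' + \mb t$ (and hence $\mb y = \mb y'$). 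This exhibits $\mb a$ as an element of $A \cap (A + \mb t)$, so $\mb x = \mb a + \mb y \in (A \cap (A+\mb t))^{\ssquare}_\varepsilon$.

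No part of this argument looks delicate; the only ingredient beyond set-theoretic bookkeeping is the tiling property (\ref{eqn:Tile}), and the only potentially confusing point is the interplay between addition in $\mathbb T^d$ and the fact that $\mb t + \mb t = \mb 0$ in $G_d$, which is exactly what makes $A^{\ssquare}_\varepsilon + \mb t = (A+\mb t)^{\ssquare}_\varepsilon$ a set of the same form. I expect the entire proof to be a short paragraph.
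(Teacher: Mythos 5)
Your proof is correct and follows the same route as the paper: the reverse containment by direct set-theoretic bookkeeping, and the forward containment by writing $\mb x$ in two ways and invoking the disjointness of the translates $\mb t + I_\varepsilon^d$, $\mb t\in G_d$, to force $\mb a = \mb a'+\mb t$. The paper simply omits the easy direction, which you spell out.
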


\begin{proof}
  We prove only that $A^{\ssquare}_{\varepsilon}\cap (A^{\ssquare}_{\varepsilon}+\mb t)\subseteq (A \cap (A+\mb t))^{\ssquare}_{\varepsilon}$, as the reverse containment is easy to check.

   Assuming $\mb x \in A^{\ssquare}_{\varepsilon}$ and $\mb x\in (A^{\ssquare}_{\varepsilon}+\mb t)$, there are $\mb a, \mb a'\in A$ such that $\mb x \in \mb a + I_{\varepsilon}^d$ and $\mb x\in \mb a'+\mb t + I_{\varepsilon}^d$.  Thus $\mb x\in (\mb a + I_{\varepsilon}^d)\cap (\mb a' + \mb t +I_{\varepsilon}^d)$, and  the mutual disjointness observed in (\ref{eqn:Tile}) implies $\mb a = \mb a'+\mb t$.  It follows that $\mb x\in (A \cap (A+\mb t))^{\ssquare}_{\varepsilon}$, as desired.
\end{proof}
The important consequence of Lemma \ref{lem:BoxIntersections} is that if $A,R\subseteq G_d$ with $A\cap (A+R)=\varnothing$, then $A_{\varepsilon}^{\ssquare}\cap (A_{\varepsilon}^\square + R)=\varnothing$.

A \emph{box} $J$ in $\mathbb T^d$ is a product $J_1\times \cdots \times J_d$ of intervals $J_i\subseteq \mathbb T$; we say that $J$ is open (closed) if each $J_i$ is an open (closed) interval. Note that the sets $A_{\varepsilon}^{\ssquare}$ defined above are finite disjoint unions of closed boxes.

The following standard fact about uniform distribution is proved in \S\ref{sec:UD}.
 \begin{lemma}\label{lem:BoxDensity}
  Let $\bm\alpha\in\mathbb T^d$ be such that $\mathbb Z\bm\alpha$ is dense in $\mathbb T^d$.  If $A\subseteq \mathbb T^d$ is a finite disjoint union of boxes and $C:=\{n\in \mathbb Z: n\bm\alpha\in A\}$ then $d(C)=\mu(A)$, where $\mu$ is Haar probability measure on $\mathbb T^d$.
\end{lemma}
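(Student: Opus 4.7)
The plan is to derive this as a standard corollary of Weyl's equidistribution theorem, with the only real content being the reduction from density of $\mathbb Z\bm\alpha$ to equidistribution of $(n\bm\alpha)_{n\geq 1}$.

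First I would show that the hypothesis $\overline{\mathbb Z\bm\alpha}=\mathbb T^d$ is equivalent to the following Kronecker-type condition: for every nonzero $\mb m \in \mathbb Z^d$, the character $\chi_{\mb m}(\mb x):=e^{2\pi i \mb m\cdot \mb x}$ satisfies $\chi_{\mb m}(\bm\alpha)\neq 1$. Indeed, if some nontrivial character $\chi_{\mb m}$ had $\chi_{\mb m}(\bm\alpha)=1$, then $\mathbb Z\bm\alpha$ would be contained in the proper closed subgroup $\ker \chi_{\mb m}$, contradicting density; the converse is Pontryagin duality (a closed subgroup is the intersection of the kernels of the characters trivial on it).

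Next, for each such nonzero $\mb m$, a geometric sum computation gives
\[
\frac{1}{N}\sum_{n=1}^N \chi_{\mb m}(n\bm\alpha) \;=\; \frac{1}{N}\cdot \frac{\chi_{\mb m}(\bm\alpha)^{N+1}-\chi_{\mb m}(\bm\alpha)}{\chi_{\mb m}(\bm\alpha)-1}\; \xrightarrow[N\to\infty]{}\;0,
\]
since $|\chi_{\mb m}(\bm\alpha)|=1$ and $\chi_{\mb m}(\bm\alpha)\neq 1$. This is Weyl's criterion, and by the standard Stone--Weierstrass argument (trigonometric polynomials are dense in $C(\mathbb T^d)$), it upgrades to $\frac{1}{N}\sum_{n=1}^N f(n\bm\alpha)\to \int f\,d\mu$ for every $f\in C(\mathbb T^d)$.

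Finally, I would transfer this from continuous $f$ to the indicator $\mb 1_A$. Since $A$ is a finite disjoint union of boxes, its topological boundary $\partial A$ is contained in a finite union of hyperplanes, hence $\mu(\partial A)=0$. Given $\epsilon>0$, sandwich $\mb 1_A$ between continuous functions $g\leq \mb 1_A\leq h$ with $\int(h-g)\,d\mu<\epsilon$ (standard, using Urysohn on slightly shrunk/enlarged boxes). Then
\[
\mu(A)-\epsilon \;\leq\; \liminf_{N\to\infty}\frac{|C\cap[1,N]|}{N}\;\leq\; \limsup_{N\to\infty}\frac{|C\cap[1,N]|}{N}\;\leq\; \mu(A)+\epsilon,
\]
and letting $\epsilon\to 0$ gives $d(C)=\mu(A)$. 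There is no real obstacle here --- the only thing to be slightly careful about is that the paper's $d(C)$ is defined on $\{1,\dots,N\}$ rather than symmetrically around $0$, but Weyl's criterion as stated above is for exactly this one-sided average, so no adjustment is needed.
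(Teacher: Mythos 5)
Your proposal is correct and follows essentially the same route as the paper: reduce to Weyl's criterion via Lemma \ref{lem:UDDensity} (your sandwich approximation for boxes is exactly the ``straightforward approximation argument'' the paper invokes there), and verify the criterion by the geometric-sum computation after showing that density of $\mathbb Z\bm\alpha$ forces $\chi(\bm\alpha)\neq 1$ for every nontrivial character. The only cosmetic difference is that you deduce $\chi(\bm\alpha)\neq 1$ from the fact that $\ker\chi$ is a proper closed subgroup, while the paper argues via density of $\{\chi(n\bm\alpha)\}$ in the image of $\chi$; both are fine.
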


\begin{lemma}\label{lem:LiftNonrecurrence1}
   Let $B$, $U\subseteq \mathbb T^d$ and assume $\mathbb Z\bm\alpha$ is dense in $\mathbb T^d$. If $B$ is a finite disjoint union of boxes with $\mu(B)>\delta$ and $B\cap (B+U) = \varnothing$, then $S:=\{n:n\bm\alpha \in U\}$ is $\delta$-nonrecurrent.
\end{lemma}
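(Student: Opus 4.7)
The plan is to exhibit an explicit $\delta$-witness set in $\mathbb Z$ by pulling back $B$ through the orbit of $\bm\alpha$. Specifically, I would define
\[
A := \{n\in \mathbb Z : n\bm\alpha \in B\}
\]
and verify the two required conditions: that $\bar d(A) > \delta$ and that $(A-A)\cap S = \varnothing$.

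The density bound is immediate from the tools already in hand. Since $B$ is a finite disjoint union of boxes and $\mathbb Z\bm\alpha$ is dense in $\mathbb T^d$, Lemma \ref{lem:BoxDensity} applies and gives $d(A) = \mu(B) > \delta$; in particular $\bar d(A) > \delta$.

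The disjointness $(A-A)\cap S = \varnothing$ will follow by a one-line contradiction argument. Suppose $n \in (A-A)\cap S$. Writing $n = a_1 - a_2$ with $a_1, a_2 \in A$, we have $a_1\bm\alpha, a_2\bm\alpha \in B$ and, since $n \in S$, also $n\bm\alpha \in U$. Then
\[
a_1\bm\alpha = a_2\bm\alpha + n\bm\alpha \in B + U,
\]
so $a_1\bm\alpha \in B \cap (B+U)$, contradicting the hypothesis $B \cap (B+U) = \varnothing$.

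There is no real obstacle here; the lemma is essentially a packaging step that transports the combinatorial/measure-theoretic disjointness on $\mathbb T^d$ to a density statement on $\mathbb Z$, and the only nontrivial input is Lemma \ref{lem:BoxDensity}, which is cited. The only thing to be mildly careful about is noting that $\bar d(A) \geq d(A) > \delta$ (since the $\limsup$ coincides with the limit when the latter exists), so the set $A$ genuinely witnesses $\delta$-nonrecurrence of $S$.
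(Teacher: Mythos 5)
Your proof is correct and is essentially identical to the paper's: both define $A=\{n:n\bm\alpha\in B\}$, invoke Lemma \ref{lem:BoxDensity} for the density bound, and derive the disjointness by the same one-line contradiction (the paper phrases it as $C\cap(C+S)=\varnothing$ rather than $(A-A)\cap S=\varnothing$, which is the equivalence noted in the remark after the definitions). Nothing further is needed.
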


\begin{proof}
  Let $C:=\{n:n\bm\alpha \in B\}$.  By Lemma \ref{lem:BoxDensity}, $d(C)= \mu(B)>\delta$.  To see that $C\cap (C+S)=\varnothing$, note that if $n\in C\cap (C+ S)$, we have $n\bm\alpha \in B$ and $n\bm\alpha \in B+U$, meaning $n\bm\alpha \in B\cap (B+U)$, which violates our hypothesis.
\end{proof}

\begin{lemma}\label{lem:LiftNonrecurrence2}
  If $R\subseteq G_d$ is $\delta$-nonrecurrent and $\bm\alpha\in \mathbb T^d$, then for sufficiently small $\varepsilon$, $S:=\{n\in \mathbb Z: n\bm\alpha\in R+V_{\varepsilon}\}$ is $\delta$-nonrecurrent.
\end{lemma}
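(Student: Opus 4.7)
The plan is to lift the $G_d$-witness of $\delta$-nonrecurrence for $R$ to a $\mathbb T^d$-witness, then invoke (an extension of) Lemma~\ref{lem:LiftNonrecurrence1}. Fix $A\subseteq G_d$ with $|A|>\delta\cdot 2^d$ and $A\cap(A+R)=\varnothing$, and set $B:=A^{\ssquare}_{\varepsilon}$. By (\ref{eqn:SquareMeasure}), $\mu(B)=|A|(\tfrac12-2\varepsilon)^d$, so $\mu(B)>\delta$ once $\varepsilon$ is small enough in terms of the slack $|A|/2^d-\delta$. Let $U:=R+V_{\varepsilon}$.

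The first step is to check that $B\cap(B+U)=\varnothing$. Since $I_{\varepsilon}^d+V_{\varepsilon}\subseteq(0,\tfrac12)^d$ and the translates $\mathbf s+(0,\tfrac12)^d$ for $\mathbf s\in G_d$ are mutually disjoint in $\mathbb T^d$ (an open-interval version of (\ref{eqn:Tile})), for $a,a'\in A$, $r\in R$, $v\in V_{\varepsilon}$ the boxes $a+I_{\varepsilon}^d$ and $a'+r+I_{\varepsilon}^d+v$ sit inside different such translates whenever $a\neq a'+r$, and are therefore disjoint. The hypothesis $A\cap(A+R)=\varnothing$ forces $a\neq a'+r$ for all $a,a'\in A$, $r\in R$, so $B\cap(B+r+v)=\varnothing$ for every $r\in R$ and $v\in V_{\varepsilon}$. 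This is the natural strengthening of Lemma~\ref{lem:BoxIntersections} absorbing the $V_{\varepsilon}$-fattening.

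If $\mathbb Z\bm\alpha$ happens to be dense in $\mathbb T^d$, Lemma~\ref{lem:LiftNonrecurrence1} applied to $B$ and $U$ immediately yields the $\delta$-nonrecurrence of $S$. The main obstacle is the case of a non-dense orbit; I would handle it by an averaging argument. Let $H:=\overline{\mathbb Z\bm\alpha}$, a closed subgroup of $\mathbb T^d$, and let $\mu_H$ denote Haar probability measure on $H$, viewed as a Borel measure on $\mathbb T^d$ supported on $H$. Fubini's theorem gives
\[
\int_{\mathbb T^d}\mu_H\bigl((B+\bm\gamma)\cap H\bigr)\,d\mu(\bm\gamma)=\mu(B)>\delta,\qquad\int_{\mathbb T^d}\mu_H\bigl((\partial B+\bm\gamma)\cap H\bigr)\,d\mu(\bm\gamma)=\mu(\partial B)=0,
\]
so one can fix $\bm\gamma\in\mathbb T^d$ with $\mu_H((B+\bm\gamma)\cap H)>\delta$ and $(\partial B+\bm\gamma)\cap H$ being $\mu_H$-null.

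Set $C:=\{n\in\mathbb Z:n\bm\alpha\in B+\bm\gamma\}$. Unique ergodicity of the rotation by $\bm\alpha$ on $H$ (valid because $\mathbb Z\bm\alpha$ is dense in $H$), combined with the null-boundary condition on $B+\bm\gamma$, yields $d(C)=\mu_H((B+\bm\gamma)\cap H)>\delta$. For the disjointness $C\cap(C+S)=\varnothing$, suppose $n,n-s\in C$ with $s\in S$; writing $s\bm\alpha=r+v$ with $r\in R$, $v\in V_{\varepsilon}$, the point $n\bm\alpha$ then lies in $(B+\bm\gamma)\cap(B+\bm\gamma+r+v)$, which is empty by the second paragraph. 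Thus $C$ witnesses the $\delta$-nonrecurrence of $S$, completing the proof.
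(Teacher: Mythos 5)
Your proof is correct, and its core is the same as the paper's: fatten the witness $A\subseteq G_d$ to the union of boxes $B=A^{\ssquare}_{\varepsilon}$, check $\mu(B)>\delta$ and $B\cap(B+R+V_{\varepsilon})=\varnothing$, and transfer along $n\mapsto n\bm\alpha$. Two differences are worth recording. First, a cosmetic one: the paper separates the box parameter $\varepsilon'$ from the neighborhood parameter $\varepsilon$ and obtains $B\cap(B+R+V_{\varepsilon})=\varnothing$ from $B\cap(B+R)=\varnothing$ by compactness of $B$, whereas you use a single $\varepsilon$ and the exact identity $I_{\varepsilon}+(-\varepsilon,\varepsilon)=(0,\tfrac{1}{2})$ together with the disjointness of the translates $\mb s+(0,\tfrac{1}{2})^d$, $\mb s\in G_d$; both work. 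Second, and more substantively: the paper concludes by citing Lemma \ref{lem:LiftNonrecurrence1}, whose hypothesis is that $\mathbb Z\bm\alpha$ is dense in $\mathbb T^d$, while the lemma being proved allows arbitrary $\bm\alpha$. You correctly identify this as the main obstacle and close it with an averaging argument on $H=\overline{\mathbb Z\bm\alpha}$: Fubini produces a translate $B+\bm\gamma$ whose trace on $H$ has $\mu_H$-measure exceeding $\delta$ and $\mu_H$-null boundary, unique ergodicity of the rotation by $\bm\alpha$ on $H$ gives $d(C)=\mu_H((B+\bm\gamma)\cap H)>\delta$ for $C=\{n:n\bm\alpha\in B+\bm\gamma\}$, and the disjointness of $C$ and $C+S$ is inherited from $B\cap(B+R+V_{\varepsilon})=\varnothing$ since intersections commute with translation by $\bm\gamma$. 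This genuinely extends the paper's argument: in the paper's applications (Lemma \ref{lem:TildeHproperties}(i) feeding into Lemmas \ref{lem:FinitePiecesExist} and \ref{lem:PiecesInSminusS}) the relevant $\bm\alpha$ always has dense orbit, so the dense case suffices there, but your version proves the lemma as literally stated, at the cost of invoking unique ergodicity, which the paper otherwise avoids.
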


\begin{proof}
  Let $A\subseteq G_d$ be such that $A\cap (A+R)=\varnothing$ and $|A|>\delta 2^d$.  Choose $\varepsilon'>0$ small enough that $|A|\bigl(\frac{1}{2}-2\varepsilon'\bigr)^d>\delta$.  Let $B:=A^{\ssquare}_{\varepsilon'}$, so that $\mu(B)=|A|\bigl(\frac{1}{2}-2\varepsilon'\bigr)^d>\delta$.  Now $B\cap (B+R)=\varnothing$, by Lemma \ref{lem:BoxIntersections}.  Since $B$ is compact, we also have $B\cap (B+R+V_\varepsilon)=\varnothing$, provided $\varepsilon$ is sufficiently small. Lemma \ref{lem:LiftNonrecurrence1} then implies $S$ is $\delta$-nonrecurrent.
\end{proof}

\begin{proof}[Proof of Lemma \ref{lem:TildeHproperties} Part (i)]
Let $\delta<\frac{1}{2}$ and $k\in \mathbb N$. By Lemma \ref{lem:HNonrecurrent}, choose $d\in \mathbb N$ so that $H_{k}(\mb 1)$ is a $\delta$-nonrecurrent subset of $\mathbb F_2^d$.  Then $H_{k}(\mbhalf)$ is a $\delta$-nonrecurrent subset of $G_d$, and Lemma \ref{lem:LiftNonrecurrence2} provides an $\varepsilon>0$ so that $\{n\in \mathbb Z:n\bm\alpha \in H_k(\mbhalf)+V_{\varepsilon}\}$ is $\delta$-nonrecurrent.  This $\delta$-nonrecurrent set is, by definition, $\tilde{H}(\bm\alpha;k,\varepsilon)$.
\end{proof}

\section{Lifting chromatic recurrence from topological groups}\label{sec:CopyCayley}

Here we show how a homomorphism from $\mathbb Z$ into a topological abelian group $K$ can be used to copy Cayley graphs from $K$ into $\mathbb Z$; we maintain the conventions of \S\ref{sec:CayleyKneser}.
%

For this section we fix a discrete abelian group $G$, a Hausdorff topological abelian group $K$ (not necessarily compact, not necessarily metrizable), and a homomorphism $\rho:G\to K$ with $\rho(G)$ dense in $K$.  We also fix $E\subseteq G$ with $\rho(E)$ dense in $K$.  The only case we will use in this article is $G=\mathbb Z$, $K=\mathbb T^d$, and $\rho(n)=n\bm\alpha$ for some $\bm\alpha\in \mathbb T^d$, so readers can specialize to this setting at will.

\begin{lemma}\label{lem:CopyCayley} Assume $G$, $\rho$, $E$, and $K$ are as specified above. If $U\subseteq K$ is open, then every finite subgraph $\mathcal G$ of $\Cay(U)$ has an isomorphic copy in $\Cay(\rho^{-1}(U)\cap (E-E))$.

Consequently, if $\Cay(U)$ has a finite subgraph with chromatic number $\geq k$, then \\ $\Cay(\rho^{-1}(U)\cap (E-E))$ has chromatic number $\geq k$.
\end{lemma}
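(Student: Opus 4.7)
The plan is to approximate the finitely many vertices of $\mathcal{G}$ by elements of $E$ whose $\rho$-images lie close to them in $K$, converting ``closeness'' in $K$ into Cayley edges via the openness of $U$ and the density of $\rho(E)$. Suppose $\mathcal{G}$ has vertices $v_1,\dots,v_n\in K$; for each edge $\{v_i,v_j\}$ of $\mathcal{G}$ I will fix an orientation so that $v_i-v_j\in U$ (if both orderings lie in $U$, pick either).

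First, I would produce a single symmetric open neighborhood $N$ of $0_K$ serving two purposes simultaneously: (a) for every oriented edge, $(v_i-v_j)+(N-N)\subseteq U$, which is available since $U$ is open and subtraction in $K$ is continuous, so each edge supplies such a neighborhood and the finitely many of them can be intersected; (b) the translates $v_1+N,\dots,v_n+N$ are pairwise disjoint, which uses Hausdorffness: for $i\neq j$ we have $v_i-v_j\neq 0_K$, and by shrinking $N$ we can ensure $v_i-v_j\notin N-N$ for all pairs at once.

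Next, invoking density of $\rho(E)$ in $K$, I would choose $e_i\in E$ with $\rho(e_i)\in v_i+N$ for each $i$. The disjointness in (b) forces the $\rho(e_i)$, and hence the $e_i$ themselves, to be pairwise distinct. For each edge $\{v_i,v_j\}$, oriented so that $v_i-v_j\in U$, the difference $e_i-e_j$ lies in $E-E$ automatically, and
\[
\rho(e_i-e_j)=\rho(e_i)-\rho(e_j)\in(v_i+N)-(v_j+N)=(v_i-v_j)+(N-N)\subseteq U,
\]
so $e_i-e_j\in\rho^{-1}(U)\cap(E-E)$. Hence the assignment $v_i\mapsto e_i$ exhibits $\mathcal{G}$ as a subgraph of $\Cay(\rho^{-1}(U)\cap(E-E))$.

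The ``consequently'' clause then follows immediately from monotonicity of chromatic number under subgraph containment: if some finite subgraph of $\Cay(U)$ has chromatic number $\geq k$, its isomorphic copy inside $\Cay(\rho^{-1}(U)\cap(E-E))$ forces the latter's chromatic number to be $\geq k$ as well. I do not expect a real obstacle here; the only mildly delicate point is arranging the single neighborhood $N$ to fulfill both conditions (a) and (b) at once, which is routine given that $\mathcal{G}$ is finite and $K$ is a Hausdorff topological group.
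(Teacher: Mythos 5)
Your proof is correct and follows essentially the same route as the paper's: choose a neighborhood $W'$ of $0_K$ with $(v-v')+(W'-W')\subseteq U$ for every edge, then use density of $\rho(E)$ to pick $g_v\in E$ with $\rho(g_v)\in v+W'$. Your extra step of shrinking the neighborhood to make the translates $v_i+N$ pairwise disjoint (hence the $e_i$ distinct) is a small refinement the paper leaves implicit, but the argument is the same.
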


\begin{proof}
  To prove the first statement of the lemma it suffices to prove that if $V$ is a finite subset of $K$, then there exist $\{g_v:v\in V\}\subseteq E$ such that for each $v, v'\in V$, we have
  \begin{equation}\label{eqn:vv'}
  v-v'\in U \implies g_v-g_{v'}\in \rho^{-1}(U).
    \end{equation} So let $V$ be a finite subset of $K$.  Let $S:=(V-V)\cap U$, and let $W$ be a neighborhood of $0$ in $K$ so that $S+W\subseteq U$.  Choose a neighborhood $W'$ of $0$ so that $W'-W'\subseteq W$.   For each $v\in V$, choose $g_v\in E$ so that $\rho(g_v)\in v+W'$; this is possible since $\rho(E)$ is dense in $K$.  We now prove (\ref{eqn:vv'}) holds with these $g_v$. Assuming $v-v'\in U$, we have
  \[\rho(g_v)-\rho(g_{v'})\in v+W' -(v'+W') = (v-v')+(W'-W')\subseteq v-v' + W \subseteq U,\]
  so $g_v-g_{v'}\in \rho^{-1}(U)$.  This proves (\ref{eqn:vv'}).

  Since chromatic number is invariant under isomorphism of graphs, the second assertion of the lemma follows immediately from the first.
\end{proof}

%

\begin{proof}[Proof of Lemma \ref{lem:TildeHproperties} Parts (ii) and (iii)]
  Part (ii) follows from Part (iii), so we only prove Part (iii).

  Let $2k\leq d\in \mathbb N$, let $E\subseteq \mathbb Z$, and let $\bm\alpha\in \mathbb T^d$ with $E\bm\alpha$ dense in $\mathbb T^d$.  We use Lemma \ref{lem:CopyCayley} with the open set $U=H_{2k+1}(\mbhalf)+V_{\varepsilon}$ and $\rho(n):=n\bm\alpha$.  Since $H_{2k+1}(\mbhalf)\subseteq U$, $\Cay(U)$ contains the finite subgraph $\Cay(H_{2k+1}(\mbhalf))$ (with vertex set $G_d$). The latter graph is isomorphic to $\Cay(H_{2k+1}(\mb 1))$, and by Lemma \ref{lem:HContainsKneser}, it has chromatic number at least $2k+1$.  Now Lemma \ref{lem:CopyCayley} implies $\Cay(\rho^{-1}(U)\cap (E-E))$ has chromatic number at least $2k+1$.  Examining the definitions, we see that $\rho^{-1}(U)=\tilde{H}(\bm\alpha;2k+1,\varepsilon)$, so $\tilde{H}(\bm\alpha;2k+1,\varepsilon)\cap (E-E)$ is $2k$-chromatically recurrent.
\end{proof}

\section{Proof of Lemmas \ref{lem:FinitePiecesExist} and \ref{lem:PiecesInSminusS}}\label{sec:PiecesProofs}

Lemma \ref{lem:FinitePiecesExist} is an immediate consequence of Parts (i) and (ii) of Lemma \ref{lem:TildeHproperties}. For Lemma \ref{lem:PiecesInSminusS}, we need an elementary result on recurrence properties of dilates.  Given $S\subseteq \mathbb Z$ write $S/m$ for the set $\{n\in \mathbb Z: mn\in S\}$ ($=\{n/m:n\in S\}\cap \mathbb Z$).

\begin{lemma}\label{lem:QuotientNonrecurrence}
  If $m\in \mathbb N$ and $S\subseteq \mathbb Z$ is $\delta$-nonrecurrent, then $S/m$ is also $\delta$-nonrecurrent.
\end{lemma}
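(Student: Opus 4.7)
My plan is to start with a witnessing set for $\delta$-nonrecurrence of $S$, partition it by residues modulo $m$, pass to one residue class of sufficient density, and then pull back through the map $n \mapsto mn+r$ to produce a set of integers with the same recurrence property for $S/m$.

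Concretely, suppose $A\subseteq \mathbb Z$ satisfies $\bar d(A)>\delta$ and $(A-A)\cap S=\varnothing$. Write $A_r := A\cap (m\mathbb Z + r)$ for $r\in\{0,\dots,m-1\}$, so that $A=\bigsqcup_{r} A_r$. Since upper density is finitely subadditive, some residue class $r$ must satisfy $\bar d(A_r)\geq \bar d(A)/m$. Define
\[
A' := \{n\in\mathbb Z : mn+r\in A\}.
\]
The inclusion $m(A'-A')\subseteq A-A$ is immediate, so $m(A'-A')\cap S=\varnothing$, which is exactly $(A'-A')\cap (S/m)=\varnothing$. This is the easy half of the argument.

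The step that requires a bit of care is showing $\bar d(A')>\delta$. I would pick a sequence $M_k\to\infty$ along which $|A_r\cap [1,M_k]|/M_k\to \bar d(A_r)$, set $N_k := \lfloor M_k/m\rfloor$, and compare $|A'\cap [1,N_k]|$ with $|A_r\cap [1,N_k m+r]|$; the two differ by $O(1)$, so
\[
\frac{|A'\cap [1,N_k]|}{N_k} = m\cdot\frac{|A_r\cap [1,N_k m+r]|}{N_k m+r}\cdot\bigl(1+o(1)\bigr) \longrightarrow m\,\bar d(A_r).
\]
Hence $\bar d(A')\geq m\bar d(A_r)\geq \bar d(A)>\delta$, and $A'$ witnesses the $\delta$-nonrecurrence of $S/m$.

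I do not anticipate any real obstacles: the whole argument is a careful bookkeeping exercise around the dilation/restriction map $n\mapsto mn+r$. The only subtlety is that upper density is not countably additive and need not average exactly, but subadditivity is enough for the pigeonhole step, and the density of $A'$ is controlled by the single chosen subsequence. Everything else — the inclusion $m(A'-A')\subseteq A-A$ and the translation from ``$m(A'-A')\cap S=\varnothing$'' to ``$(A'-A')\cap (S/m)=\varnothing$'' — is formal from the definition of $S/m$.
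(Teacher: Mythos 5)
Your argument is correct and is essentially the paper's proof: both pigeonhole on residue classes mod $m$ using finite subadditivity of $\bar d$, pull the chosen class back through $n\mapsto mn+r$, and check that dilation by $1/m$ multiplies upper density by $m$ while $m(A'-A')\subseteq A-A$ gives the disjointness from $S/m$. The only difference is that you spell out the density computation along a subsequence, which the paper leaves implicit.
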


\begin{proof}  Let $S\subseteq \mathbb Z$ be $\delta$-nonrecurrent, and
  choose $A\subseteq \mathbb Z$ such that $(A-A)\cap S=\varnothing$ and $\bar{d}(A)>\delta$.  The subadditivity of $\bar{d}$ implies that for some $t\in \{0,\dots,m-1\}$, the set $A_{t}:=A\cap (t+m\mathbb Z)$ satisfies $\bar{d}(A_t)>\delta/m$.  The elements of $A_t-t$ are all divisible by $m$, so $A':=(A_t-t)/m$ satisfies $\bar{d}(A')>\delta$.  Furthermore, $m(A'-A')\subseteq A-A$, so $m(A'-A')$ is disjoint from $S$, meaning $A'-A'$ is disjoint from $S/m$.
\end{proof}

We recall the statement of Lemma \ref{lem:PiecesInSminusS}: if  $k$, $m\in \mathbb N,$ $\delta<\frac{1}{2}$, and $E\subseteq \mathbb Z$ is an infinite set, then there is a finite $\delta$-nonrecurrent set $S\subseteq \mathbb N$ such that $mS$ is $k$-chromatically recurrent and $mS\subseteq E-E$. Our proof  will use the following result from the theory of uniform distribution, which we prove in \S\ref{sec:UD}.
\begin{lemma}\label{lem:InfiniteDense}
  Let $E\subseteq \mathbb Z$ be an infinite set and $d\in \mathbb N$. Then there is an $\bm\alpha \in \mathbb T^d$ such that $E\bm\alpha:=\{n\bm\alpha:n\in E\}$ is topologically dense in $\mathbb T^d$.
\end{lemma}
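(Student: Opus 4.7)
The plan is a Baire category argument carried out in the compact metric space $\mathbb T^d$. Let $\{U_k\}_{k\in \mathbb N}$ be a countable base for the topology of $\mathbb T^d$ consisting of nonempty open sets (for instance, open boxes with rational endpoints). For each nonempty open $U\subseteq \mathbb T^d$, I would consider
\[
D_U := \{\bm\alpha \in \mathbb T^d : n\bm\alpha \in U \text{ for some } n\in E\} = \bigcup_{n\in E} \phi_n^{-1}(U),
\]
where $\phi_n:\mathbb T^d \to \mathbb T^d$ denotes the continuous homomorphism $\bm\alpha \mapsto n\bm\alpha$. Since each $\phi_n$ is continuous, $D_U$ is open. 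Any $\bm\alpha$ lying in the intersection $\bigcap_k D_{U_k}$ has the property that $E\bm\alpha$ meets every basic open set $U_k$ and is therefore dense in $\mathbb T^d$; so by the Baire category theorem it suffices to prove that every $D_U$ is dense.

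To prove density of $D_U$, fix a nonempty open $V\subseteq \mathbb T^d$ and choose $\eta>0$ small enough that $V$ contains a closed box of side length $\eta$. Because $E$ is infinite, I can select $n\in E$ with $|n|>1/\eta$. An interval $J\subseteq \mathbb T$ of length $\eta$ satisfies $\operatorname{length}(nJ)=|n|\eta>1$, so $nJ$ wraps around $\mathbb T$ and equals all of $\mathbb T$. Applying this coordinatewise, $\phi_n(V)=\mathbb T^d$, and in particular $\phi_n(V)\supseteq U$. Hence there exists $\bm\alpha\in V$ with $n\bm\alpha\in U$, i.e.\ $V\cap \phi_n^{-1}(U)\neq \varnothing$, which gives $V\cap D_U\neq\varnothing$. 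This proves density.

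The argument then concludes by applying the Baire category theorem to the countable family $\{D_{U_k}\}$: the intersection $\bigcap_k D_{U_k}$ is a dense $G_\delta$, hence nonempty, and any $\bm\alpha$ in this intersection satisfies the conclusion of the lemma. The only nonroutine step is the density of $D_U$, and even this reduces to the elementary observation that multiplication by a large integer on $\mathbb T$ stretches any short interval across the whole circle; I expect no serious obstacle.
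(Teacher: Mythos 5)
Your proposal is correct and follows essentially the same route as the paper: a Baire category argument over a countable base of boxes, with density of each open set $D_U$ coming from the fact that multiplication by a large integer stretches a short box across all of $\mathbb T^d$ (the paper packages this stretching as its Lemma \ref{lem:Mixing}, phrased in terms of $n^{-1}I\cap J$ containing a box, but the content is identical). Your observation that $\phi_n(V)=\mathbb T^d$ once $|n|\eta>1$ is a slightly more direct way to see the same thing.
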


\begin{proof}[Proof of Lemma \ref{lem:PiecesInSminusS}]
Fix $k, m\in \mathbb N$ and $\delta<\frac{1}{2}$.  Let $E\subseteq \mathbb Z$ be infinite.  Let $E'\subseteq E$ be an infinite subset with $m|(b-a)$ for all $a, b\in E'$ (i.e.~the elements of $E'$ are mutually congruent mod $m$).

By Part (i) of Lemma \ref{lem:TildeHproperties}, choose $d\in \mathbb N$ large enough and $\varepsilon>0$ small enough that for all $\bm\alpha\in \mathbb T^d$,  $\tilde{H}_{\bm\alpha}:=\tilde{H}(\bm\alpha;2k+1,\varepsilon)$ is $\delta$-nonrecurrent.  Using Lemma \ref{lem:InfiniteDense}, we fix $\bm\alpha\in \mathbb T^d$ such that $E'\bm\alpha$ is dense in $\mathbb T^d$.  We then apply Part (iii) of Lemma \ref{lem:TildeHproperties} to find a finite subset $S_0\subseteq \tilde{H}_{\bm\alpha}\cap (E'-E')$ which is $2k$-chromatically recurrent.

Since the elements of $E'$ are mutually congruent mod $m$, we have $S_0\subseteq m\mathbb Z$, and the previous paragraph shows that $S_0$ is $2k$-chromatically recurrent and $\delta$-nonrecurrent.   Let $S:=S_0/m$. By Lemma \ref{lem:QuotientNonrecurrence}, $S$ is also $\delta$-nonrecurrent.  Now $S$ is the desired set: we have $mS=S_0\subseteq E'-E'\subseteq E-E$, $S$ is $\delta$-nonrecurrent, and $mS$ is $2k$-chromatically recurrent.
\end{proof}

\section{Uniform distribution and denseness in \texorpdfstring{$\mathbb T^d$}{Td}}\label{sec:UD}
The material in this section is due to Kronecker and to Weyl; the standard reference is \cite{KuipersNiederreiter}.  We present it here to make the proof of Lemmas \ref{lem:BoxDensity} and \ref{lem:InfiniteDense} self-contained.  For this section we fix $d\in \mathbb N$ and write $\mu$ for Haar probability measure on $\mathbb T^d$.

\subsection{Uniform distribution}

A sequence $\bm\alpha_1,\bm\alpha_2,\dots$ of points in $\mathbb T^d$ is called \emph{uniformly distributed} if
\[\lim_{N\to\infty} \frac{1}{N}\sum_{n=1}^N f(\bm\alpha_n)=\int f\, d\mu\] for every continuous $f:\mathbb T^d\to \mathbb C$.  A straightforward approximation argument shows that a sequence $(\bm\alpha_n)_{n\in \mathbb N}$ is uniformly distributed if and only if for every box $R:=\prod_{j=1}^d [a_j,b_j]$ in $\mathbb T^d$, we have
\[
\lim_{N\to \infty} \frac{|\{n\in \{1,\dots,N\}: \bm\alpha_n\in R\}|}{N} = \mu(R).
\]
Consequently, the terms of a uniformly distributed sequence form a dense subset of $\mathbb T^d$.

The above criterion immediately yields the following lemma.
\begin{lemma}\label{lem:UDDensity} If $A\subseteq \mathbb T^d$ is a finite disjoint union of boxes and $(\bm\alpha_n)_{n\in \mathbb N}$ is uniformly distributed then $C:=\{n:\bm\alpha_n\in A\}$ has $d(C)=\mu(A)$.
\end{lemma}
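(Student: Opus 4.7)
The plan is to derive Lemma \ref{lem:UDDensity} as an almost immediate consequence of the box criterion for uniform distribution stated just above it. No new analytic machinery is required; the only work is to pass from a single box to a finite disjoint union of boxes via finite additivity.

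First I would write $A = R_1 \cup \cdots \cup R_k$ as a disjoint union of boxes, and set $C_j := \{n \in \mathbb N : \bm\alpha_n \in R_j\}$. Since the union is disjoint, the sets $C_j$ partition $C$, so for every $N \in \mathbb N$ we have the exact identity
\[
|C \cap \{1,\dots,N\}| = \sum_{j=1}^k |C_j \cap \{1,\dots,N\}|.
\]
The box criterion restated above Lemma \ref{lem:UDDensity} says $\lim_{N\to\infty} |C_j \cap \{1,\dots,N\}|/N = \mu(R_j)$ for each $j$.

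Dividing the displayed identity by $N$ and letting $N \to \infty$, the limit on the right-hand side exists as a finite sum of convergent terms and equals $\sum_{j=1}^k \mu(R_j)$. Hence the limit on the left-hand side exists as well; this is the statement that $d(C)$ exists (not merely $\bar d(C)$). By finite additivity of $\mu$ on the disjoint union $A = \bigsqcup_j R_j$, the limit equals $\mu(A)$, giving $d(C) = \mu(A)$ as claimed.

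The only subtlety worth flagging is that the box criterion as stated in the paper applies to closed boxes $\prod_j [a_j,b_j]$, while ``box'' is defined earlier as a product of intervals of any type (open, closed, half-open). Since a box and its closure differ only on a $\mu$-null boundary, and since removing or adding such boundary points changes each $C_j$ by a set of density zero (the boundary of a box in $\mathbb T^d$ is a finite union of codimension-one boxes of Haar measure $0$, to which the same argument applies), this discrepancy is harmless; I would note this briefly rather than expand the computation. I expect no real obstacle here: the lemma is packaging finite additivity of Cesàro averages over the partition into boxes.
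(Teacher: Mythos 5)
Your proof is correct and is exactly the argument the paper intends: the paper gives no proof at all, asserting that the box criterion ``immediately yields'' the lemma, and your finite-additivity decomposition over the disjoint boxes (together with the careful remark that open/half-open boxes differ from closed ones only by measure-zero boundary boxes, which contribute density zero) is precisely that immediate argument written out.
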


We write $\mathcal S^1$ for the group of complex numbers of modulus $1$; the group operation on $\mathcal S^1$ is multiplication and the topology is the one inherited from $\mathbb C$.  A \emph{character} of $\mathbb T^d$ is a continuous homomorphism $\chi: \mathbb T^d\to \mathcal S^1$;  the set of characters of $\mathbb T^d$ is denoted by $\widehat{\mathbb T}^d$.  A well known fact from basic harmonic analysis is that every character of $\mathbb T^d$ has the form $\chi(x_1,\dots,x_d):=\exp(2\pi i (n_1x_1+\cdots + n_dx_d))$, where $n_1,\dots,n_d$ are integers.  The \emph{trivial character} satisfies $\chi(\mb x)=1$ for all $\mb x\in \mathbb T^d$.  It is easy to check that if $\chi$ is a nontrivial character and $m\in \mathbb Z\setminus \{0\}$ then $\mb x\mapsto \chi(m \mb x)$ is another nontrivial character.  Furthermore, the characters of $\mathbb T^d$ are mutually orthogonal in $L^2(\mu)$.

The \emph{Weyl criterion} \cite[p.~62]{KuipersNiederreiter} for uniform distribution says that a sequence of points $(\bm\alpha_n)_{n\in \mathbb N}$ in $\mathbb T^d$ is uniformly distributed if and only if
\begin{equation}\label{eqn:Weyl}
\lim_{N\to\infty} \frac{1}{N}\sum_{n=1}^N \chi(\bm\alpha_n)=0 \quad \text{for every nontrivial character } \chi\in \widehat{\mathbb T}^d.
\end{equation}
We now use Weyl criterion to prove Lemma \ref{lem:BoxDensity}, which says that if $(n\bm\alpha)_{n\in \mathbb N}$ is dense in $\mathbb T^d$, $A\subseteq \mathbb T^d$ is a finite disjoint union of boxes, and $C:=\{n\in\mathbb Z:n\bm\alpha \in A\}$, then $d(C)=\mu(A)$.
\begin{proof}[Proof of Lemma \ref{lem:BoxDensity}] By Lemma \ref{lem:UDDensity}, it suffices to prove that if $(n\bm\alpha)_{n\in \mathbb N}$ is dense in $\mathbb T^d$, then (\ref{eqn:Weyl}) is satisfied by $\bm\alpha_n=n\bm\alpha$.  To prove this, we need only show that $\chi(\bm\alpha)\neq 1$, for then we may apply the identity $\chi(n\bm\alpha)=\chi(\bm\alpha)^n$ and simplify the partial sums $S_N:=\sum_{n=1}^N \chi(n\bm\alpha)$ as $\chi(\bm\alpha)\frac{\ \, 1-\chi(\bm\alpha)^N}{1-\chi(\bm\alpha)}$.  Since $|\chi(\bm\alpha)|=1$, the latter quantity is bounded in modulus by a number independent of $N$, whence $\lim_{N\to \infty} \frac{1}{N}S_N=0$.    To see that $\chi(\bm\alpha)\neq 1$, note that the denseness of $(n\bm\alpha)_{n\in \mathbb N}$ in $\mathbb T^d$ implies that $\{\chi(n\bm\alpha):n\in \mathbb N\}$ is dense in the image of $\chi$ (since $\chi$ is continuous), and in particular $\chi(n\bm\alpha)\neq 1$ for some $n$.  This implies $\chi(\bm\alpha)^n\neq 1$, so $\chi(\bm\alpha)\neq 1$.
\end{proof}

\subsection{Denseness}

%
%

For a set $D\subseteq \mathbb T^d$ and $n\in \mathbb Z$, let $n^{-1}D:=\{x\in \mathbb T^d: nx\in D\}$.  For example, with $D=(0.5,0.6) \subseteq \mathbb T$, we have $3^{-1}(0.5,0.6) = (\tfrac{1}{6},\tfrac{1}{5}) \cup (\tfrac{1}{2}, \tfrac{8}{15}) \cup (\tfrac{5}{6},\tfrac{13}{15})$.


In the remainder of this section, we use ``open box'' to mean ``nonempty open box.''

\begin{lemma}\label{lem:Mixing}
  Let $I$ and $J$ be open boxes in $\mathbb T^d$.  For all sufficiently large $n$, $n^{-1} I \cap J$ contains an open box.
\end{lemma}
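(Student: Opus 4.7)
The plan is to analyze $n^{-1}I$ explicitly as a disjoint union of small open boxes, and then observe that for large $n$ the mesh becomes fine enough that one of these boxes sits inside $J$. Fix representatives so that $I = \prod_{j=1}^d (a_j,b_j)$ and $J = \prod_{j=1}^d (c_j,d_j)$ with $0\leq b_j - a_j < 1$ and $0 < d_j - c_j \leq 1$. For each coordinate $j$, the preimage $n^{-1}(a_j,b_j)$ under multiplication by $n$ on $\mathbb T$ is the disjoint union of the $n$ open intervals $\left(\frac{a_j+k}{n},\frac{b_j+k}{n}\right)$ for $k=0,1,\dots,n-1$; each has length $(b_j-a_j)/n$, and their left endpoints sit on the $\frac{1}{n}$-spaced grid $\frac{a_j}{n} + \frac{1}{n}\mathbb Z$. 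Taking products, $n^{-1}I$ is a disjoint union of $n^d$ open boxes.

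The second step is to find, for large $n$, an index $k_j \in \{0,1,\dots,n-1\}$ in each coordinate so that the resulting product box lies inside $J$. For a single $j$, I need $\left[\frac{a_j+k_j}{n},\frac{b_j+k_j}{n}\right]\subseteq (c_j,d_j)$, which is equivalent to the integer condition $k_j \in (nc_j - a_j,\; nd_j - b_j)$. This open real interval has length $n(d_j-c_j) - (b_j-a_j)$, which exceeds $1$ as soon as $n > (1 + b_j - a_j)/(d_j - c_j)$, and therefore contains at least one integer. Choosing $n$ larger than this bound simultaneously for every $j\leq d$, I obtain integers $k_1,\dots,k_d$ whose associated sub-box $\prod_j \left(\frac{a_j+k_j}{n},\frac{b_j+k_j}{n}\right)$ lies in $n^{-1}I \cap J$, proving the lemma.

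I do not expect a real obstacle: the argument is just a counting/mesh estimate. The only minor care needed is treating intervals on $\mathbb T$ that may ``wrap around'' $0$; this is handled by choosing lifts to $\mathbb R$ with $b_j - a_j < 1$ and $d_j - c_j \leq 1$, after which the calculation above takes place entirely in $\mathbb R$ and then projects back to $\mathbb T$.
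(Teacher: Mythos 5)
Your proof is correct and follows essentially the same route as the paper: decompose $n^{-1}I$ into $n^d$ evenly spaced boxes of side $\frac{1}{n}(b_j-a_j)$ and, for $n$ large, locate indices $k_j$ placing one such box inside $J$. You make explicit the quantitative step the paper leaves implicit (the interval of admissible $k_j$ has length $n(d_j-c_j)-(b_j-a_j)>1$, hence contains an integer), which is a welcome clarification but not a different argument.
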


\begin{proof}
  We may assume, without loss of generality, that $I= \prod_{m=1}^d (a_m,b_m)$ and $J = \prod_{m=1}^d (c_m,d_m)$, where $0\leq a_m,b_m,c_m,d_m\leq 1$.  Now $n^{-1}I$ can be written as the union of boxes $L(k_1,\dots,k_d) := L_0 + \frac{1}{n}(k_1,\dots,k_d)$, $0\leq k_m \leq n-1$, $k_m\in \mathbb Z$, where $L_0:=\prod_{m=1}^d (a_m/n,b_m/n)$.  We see that for $n$ sufficiently large, there are $k_m \in [0,n-1]$ such that $c_m \leq  (a_m+ k_m)/n, (b_m+k_m)/n \leq d_m$ for each $m$, meaning $n^{-1}I\cap J$ contains the box $L(k_1,\dots,k_d)$.
\end{proof}

\begin{lemma}\label{lem:HitInterval}
Let $I\subseteq \mathbb T^d$ be an open box and let $E\subseteq \mathbb Z$ be an infinite set.  Then there is a dense open set of $\bm\alpha\in \mathbb T^d$ such that $E\bm\alpha \cap I \neq \varnothing$.
\end{lemma}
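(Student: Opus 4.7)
The plan is to show the set $U := \{\bm\alpha\in\mathbb T^d : E\bm\alpha\cap I\neq\varnothing\}$ is open by writing it as $U=\bigcup_{n\in E} n^{-1}I$, then use Lemma \ref{lem:Mixing} together with the infiniteness of $E$ to show $U$ meets every basic open box.

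First I would note that each $n^{-1}I$ (for $n\in \mathbb Z$) is open as the preimage of the open set $I$ under the continuous map $\bm\alpha \mapsto n\bm\alpha$, and that $\bm\alpha\in U$ iff $n\bm\alpha\in I$ for some $n\in E$, iff $\bm\alpha \in n^{-1}I$ for some $n\in E$. Hence $U$ is a union of open sets, and is therefore open.

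For density, since the open boxes form a basis for the topology of $\mathbb T^d$, it suffices to show $U\cap J\neq\varnothing$ for every open box $J\subseteq\mathbb T^d$. Fix such a $J$. By Lemma \ref{lem:Mixing}, there is $N_1$ such that for every $n\geq N_1$ the set $n^{-1}I\cap J$ contains an open box, and in particular is nonempty. To handle the possibility that $E$ contains only finitely many positive integers, observe that $-I$ is also an open box, so Lemma \ref{lem:Mixing} gives $N_2$ such that $n^{-1}(-I)\cap J\neq\varnothing$ whenever $n\geq N_2$. Since $n^{-1}I = (-n)^{-1}(-I)$, this means $m^{-1}I\cap J\neq\varnothing$ for every integer $m\leq -N_2$. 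Combining these, $m^{-1}I\cap J\neq\varnothing$ whenever $|m|\geq \max(N_1,N_2)$. Because $E$ is infinite, it contains such an $m$, which gives $\bm\alpha\in J$ with $m\bm\alpha\in I$, hence $\bm\alpha\in U\cap J$.

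The argument is essentially a one-line consequence of Lemma \ref{lem:Mixing} once the correct reformulation of $U$ is written down; the only mild obstacle is remembering to cover the case where $E$ has infinitely many negative terms but only finitely many positive ones, which is handled by the symmetry $n^{-1}I=(-n)^{-1}(-I)$.
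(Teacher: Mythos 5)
Your proof is correct and follows essentially the same route as the paper: rewrite the set as $\bigcup_{n\in E} n^{-1}I$, observe it is open, and use Lemma \ref{lem:Mixing} to show it meets every open box $J$. In fact you are slightly more careful than the paper's own (very terse) argument, since Lemma \ref{lem:Mixing} as stated only covers large positive $n$, and your symmetry observation $n^{-1}I=(-n)^{-1}(-I)$ correctly handles the case where $E$ is bounded above.
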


\begin{proof}
 We first observe that $\{\bm\alpha \in \mathbb T^d: E\bm\alpha \cap I\neq \varnothing\} = \bigcup_{n\in E} n^{-1}I$.  So it suffices to prove that $\bigcup_{n\in E} n^{-1}I$ contains a dense open set. To see this, note that Lemma \ref{lem:Mixing} implies that for all open boxes $J,$ there is an $n\in E$ such that $n^{-1}I\cap J$ contains an open box.  
 \end{proof}

\begin{proof}[Proof of Lemma \ref{lem:InfiniteDense}] Let $d\in \mathbb N$ and let $E\subseteq \mathbb Z$ be infinite.  Let $\{I^{(k)}:k\in \mathbb N\}$ enumerate the  boxes in $\mathbb T^d$ formed by open intervals with rational endpoints, and for each $k$ let $Q_k:=\{\bm \alpha: E\bm\alpha \cap I^{(k)}\neq \varnothing\}$.  By Lemma \ref{lem:HitInterval}, each $Q_k$ is dense and open in $\mathbb T^d$, so $Q:=\bigcap_{k\in \mathbb N} Q_k$ is nonempty, by the Baire category theorem.  For each box $I^{(k)}$ and each $\bm\alpha\in Q$, we have $E\bm\alpha \cap I^{(k)}\neq \varnothing$, so $E\bm\alpha$ is dense in $\mathbb T^d$.
\end{proof}

\section{Questions and remarks}\label{sec:Questions}
A proof or disproof of the following conjecture would be very interesting in relation to the results of \cite{GriesmerKatznelsonRemarks}.

\begin{conjecture}\label{conj:Hereditary}
  If $S\subseteq \mathbb Z$ is a set of density recurrence, then there is a set $S'\subseteq S$ such that $S'$ is a set of chromatic recurrence and not a set of density recurrence.
\end{conjecture}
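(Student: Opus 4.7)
The plan is to parallel the proof of Theorem \ref{thm:KrizInSminusS}, building $S'$ as an increasing union of finite pieces $S'_k$ satisfying the analogs of conditions (i)--(iii) from that proof, but with the additional constraint $S'_k \subseteq S$ for every $k$.  The assembly step using Lemma \ref{lem:2Pieces} will go through almost verbatim; the only subtle point is ensuring that each new piece $m_k S''$ added at the inductive step lies inside the given density recurrent set $S$.  To that end, I would prove an analog of Lemma \ref{lem:PiecesInSminusS}: \emph{if $S$ is density recurrent, $k,m\in\mathbb{N}$, and $\delta<\tfrac{1}{2}$, then there is a finite $\delta$-nonrecurrent $S''\subseteq \mathbb{N}$ with $mS''\subseteq S$ and $mS''$ being $k$-chromatically recurrent.}  A short computation shows that $S/m:=\{n:mn\in S\}$ is itself density recurrent whenever $S$ is (if $\bar d(A)>\delta$ then $\bar d(mA)\geq \delta/m$, so $(mA-mA)\cap S\neq\varnothing$ yields $(A-A)\cap (S/m)\neq\varnothing$), so it suffices to prove the ``$m=1$'' form of the lemma, applied to the density recurrent set $T:=S/m$.

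To attempt this key lemma I would follow the template of \S\ref{sec:BH}--\S\ref{sec:CopyCayley}: choose $d$ large and $\varepsilon>0$ small via Lemma \ref{lem:TildeHproperties}(i), pick a suitable $\bm\alpha\in\mathbb{T}^d$, and try to realize a Kneser-style Cayley subgraph inside $T\cap \tilde{H}(\bm\alpha;2k+1,\varepsilon)$.  The preparatory step is to produce $\bm\alpha$ such that $T\bm\alpha$ is dense in $\mathbb{T}^d$.  This is not automatic for arbitrary density recurrent $T$ (for example the Bohr set $T=\{n:\|n\sqrt{2}\|<\tfrac{1}{10}\}$ does not satisfy $T\sqrt{2}$ dense in $\mathbb{T}$), but one could try to choose $\bm\alpha$ ``independent'' from whatever characters control the structure of $T$, via a Baire-category argument analogous to Lemma \ref{lem:InfiniteDense} applied on the coordinate perpendicular to those characters, combined with some structural description of $T$ provided by density recurrence.

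The main obstacle is genuine and different in character from anything handled in the paper.  Lemma \ref{lem:CopyCayley} produces a subgraph whose edges lie in $\rho^{-1}(U)\cap(E-E)$: the vertices $g_v$ are chosen in $E$, and it is the \emph{differences} $g_v-g_{v'}$ that land in $E-E$ by construction.  For the conjecture we need edges in $\rho^{-1}(U)\cap T$, i.e.\ the differences themselves must lie in $T$, not merely in $T-T$.  A cascading choice $g_{v_{i+1}}=g_{v_i}+s_i$ with $s_i\in T$ fails because nonconsecutive differences are sums $s_i+s_{i+1}+\cdots+s_{j-1}$ which generally leave $T$, and taking $g_v\in T$ with a common anchor $g_0$ only gives differences in $T-T$.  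Overcoming this appears to require genuinely new input, such as a structural theorem forcing a density recurrent set to contain a Cayley copy of $KG(d,r)$ for suitable $d,r$, or a Furstenberg-correspondence argument producing finitely many integers all of whose pairwise differences lie in $T$ and approximate prescribed points of $\mathbb{T}^d$.  Proving such a statement in a way that truly exploits density recurrence rather than only Bohr recurrence is where I expect the real difficulty to lie, and is closely tied to the questions of \cite{GriesmerKatznelsonRemarks} about separating Bohr recurrence from density recurrence.
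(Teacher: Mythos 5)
This statement is an open conjecture: the paper offers no proof of it, and explicitly flags it as a problem whose resolution ``would be very interesting.'' Your proposal, read carefully, is also not a proof --- it is a plan that correctly identifies the decisive obstruction and then concedes that overcoming it ``appears to require genuinely new input.'' So there is a genuine gap, and it is exactly the one you name: Lemma \ref{lem:CopyCayley} places the \emph{vertices} $g_v$ of the copied Kneser-type graph in a prescribed set $E$, so the edges land in $E-E$; for the conjecture you would need the edge differences themselves to land in $S$, and neither the cascading choice $g_{v_{i+1}}=g_{v_i}+s_i$ nor anchoring all vertices in $S$ achieves this, since both only control $S-S$. This is precisely the generalization the paper itself says is missing --- replacing $E-E$ in Part (iii) of Lemma \ref{lem:TildeHproperties} by the given recurrent set --- so your diagnosis is accurate, but no step of your outline actually produces a finite $2k$-chromatically recurrent subset of $S\cap\tilde{H}(\bm\alpha;2k+1,\varepsilon)$, which is the entire content of the conjecture.

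Two smaller points. First, your reduction to the case $m=1$ via the observation that $S/m$ inherits density recurrence is correct and would be a legitimate preliminary step. Second, the preparatory step you worry about (finding $\bm\alpha$ with $T\bm\alpha$ dense) is not actually an obstacle: Lemma \ref{lem:InfiniteDense} already supplies, for \emph{any} infinite set $T$, some $\bm\alpha\in\mathbb{T}^d$ with $T\bm\alpha$ dense --- your Bohr-set example only shows that a particular $\bm\alpha$ can fail, not that all do. The difficulty is concentrated entirely in the step you identify last, and as of this paper it remains open.
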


Perhaps the proof of Theorem \ref{thm:KrizInSminusS} can be extended to prove Conjecture \ref{conj:Hereditary}.  If so, the key lemma to generalize is Part (iii) of Lemma \ref{lem:TildeHproperties}. Specifically, the required generalization would replace $E-E$ with an arbitrary set of chromatic recurrence, resulting in the following conjecture.  We use the notation $\tilde{H}(\bm\alpha;k,\varepsilon)$ defined in (\ref{eqn:BHdef}).

\begin{conjecture}
  Assume $S\subseteq \mathbb Z$ is a set of chromatic recurrence, $k\in \mathbb N$, and $C>0$.  Then there exist infinitely many $d\in \mathbb N$ and $\bm\alpha \in \mathbb T^d$, such that for all $\varepsilon>0$, the intersection
  \[S\cap \tilde{H}(\bm\alpha; C\sqrt{d},\varepsilon)\]
  is a set of $k$-chromatic recurrence.
\end{conjecture}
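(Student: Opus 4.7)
The conjecture strengthens Part (iii) of Lemma \ref{lem:TildeHproperties}: the set $E-E$ (automatically chromatically recurrent when $E$ is infinite) is replaced by an arbitrary chromatically recurrent $S$. My plan is to retain the overall strategy of Parts (ii) and (iii) of Lemma \ref{lem:TildeHproperties}: locate $\bm\alpha \in \mathbb T^d$ and a finite Kneser-type subgraph of $\Cay(H_{\lfloor C\sqrt d\rfloor}(\mbhalf)+V_\varepsilon)$ whose vertices lift to integers in such a way that the Kneser edge-differences all land in $S$ rather than merely in $E-E$.

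Fix $k$ and $C$, and take $d$ large enough that by Corollary \ref{cor:ChromaticLowerBound}, $KG(d, r)$ has chromatic number exceeding $k$ for $r = \lfloor d/2\rfloor - \lfloor C\sqrt d/2\rfloor$. As in Lemma \ref{lem:HContainsKneser}, identify each $r$-subset $T\subseteq \{1,\dots,d\}$ with the vertex $\mb x_T = 1_T/2 \in G_d$; disjoint pairs $T, T'$ satisfy $\mb x_T - \mb x_{T'} \in H_{\lfloor C\sqrt d\rfloor}(\mbhalf)$. I would then aim to produce integers $n_T$, indexed by these $r$-subsets, such that (a) each $n_T\bm\alpha$ lies in $\mb x_T+V_{\varepsilon/2}$, and (b) whenever $T\cap T' = \varnothing$, the difference $n_T - n_{T'}$ lies in $S$. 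Property (a) forces $n_T - n_{T'} \in \tilde H(\bm\alpha; C\sqrt d, \varepsilon)$, so property (b) would embed $KG(d,r)$ inside $\Cay(S \cap \tilde H(\bm\alpha; C\sqrt d, \varepsilon))$, yielding $k$-chromatic recurrence by the chromatic-number computation of Lemma \ref{lem:HContainsKneser}.

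A natural attempt at property (b) is iterative: at stage $j$, having chosen $n_{T_1},\dots, n_{T_{j-1}}$, consider a finite coloring of $\mathbb Z$ encoding both the approximate location of $n\bm\alpha$ near $\mb x_{T_j}$ and the differences $n - n_{T_i}$ (for $T_i$ disjoint from $T_j$) against some fixed finite obstruction, and then use chromatic recurrence of $S$ to find a legal $n_{T_j}$. The essential challenge is that chromatic recurrence only furnishes one monochromatic pair per coloring, whereas one needs the entire Kneser-indexed family of pairwise differences to fall into $S$ simultaneously.

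This Ramsey-like strengthening of chromatic recurrence is, I expect, the main obstacle and the real content of the conjecture. A successful approach would presumably have to promote chromatic recurrence to a configurational statement, perhaps via the Bohr compactification of $\mathbb Z$ or a dynamical recurrence argument producing an entire finite configuration rather than a single pair. The close tie to Conjecture \ref{conj:Hereditary} noted in the paper suggests this step is genuinely hard, and a new idea beyond the Kriz--Ruzsa framework of \S\ref{sec:Proofs} is likely required.
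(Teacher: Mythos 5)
This statement is stated in the paper as an open conjecture; the paper offers no proof of it, only the observation that it would follow from (and would be needed for) a suitable generalization of Part (iii) of Lemma \ref{lem:TildeHproperties}. Your proposal is therefore not being measured against an existing argument, and, as you yourself concede in the final paragraph, it is not a proof: it is an accurate reduction of the conjecture to an unresolved step.

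The genuine gap is exactly the one you name, and it is worth being precise about why it is not a technicality. In the paper's Lemma \ref{lem:CopyCayley}, the reason all the edge-differences $g_v-g_{v'}$ land in $E-E$ is structural: \emph{any} two elements of $E$ have their difference in $E-E$, so one only has to place each $\rho(g_v)$ near $v$ and the membership of the differences in $E-E$ is automatic. For a general set of chromatic recurrence $S$ there is no analogous ``base set'' --- $S$ need not contain $E'-E'$ for any infinite $E'$ --- so property (b) of your plan cannot be arranged pointwise and must instead be extracted from the recurrence property of $S$ itself. But $k$-chromatic recurrence only guarantees, for each finite coloring, a \emph{single} pair $a,b$ with $b-a\in S$; it does not supply a $KG(d,r)$-indexed family $\{n_T\}$ with $n_T-n_{T'}\in S$ for all disjoint $T,T'$, and your iterative scheme stalls at the very first stage for the reason you state (each application of chromatic recurrence produces one good pair for one coloring, not a point compatible with all previously chosen points simultaneously). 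Such a ``configuration recurrence'' statement is strictly stronger than chromatic recurrence and is essentially the content of the conjecture (and of Conjecture \ref{conj:Hereditary}); assuming it is circular. One further point: even granting the configuration step, you would still need to justify the choice of $\bm\alpha$, since the paper obtains denseness of $E\bm\alpha$ via Lemma \ref{lem:InfiniteDense} and Baire category, and it is not clear what object should play the role of $E$ when only $S$ is given. In short, your proposal correctly locates the difficulty but does not close it; the conjecture remains open.
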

See the remarks following Theorem \ref{thm:Lovasz} above to motivate the appearance of $C\sqrt{d}$ here.

\begin{remark}
    There are many constructions of sets with prescribed recurrence properties, such as \cite{BourgainvdC}, \cite{Forrest}, \cite{GrRRPD}, \cite{griesmer2020separating}, \cite{Katznelson} \cite{MountakisvdC}.  These all use one or both of the main ideas of Kriz's construction (i.e.~they use Step I or Step II of the outline in \S\ref{sec:outline} above).

  Among the constructions of sets demonstrating that some recurrence property does not imply density recurrence, \emph{every} such construction is in some way based on properties of Hamming balls (or approximate Hamming balls) in $\mathbb F_p^d$ (or on $\mathbb T^d$).  In other words, there is only one known way\footnote{Sets constructed this way are often referred to as \emph{niveau sets} in the additive combinatorics literature.} to fulfill Step I in our outline.  However, Lemma 2.3 of \cite{griesmer2020separating} provides a very different way to implement Step II.  In contrast to the present article, the implementation of Step II in \cite{griesmer2020separating} is not independent of the form of the pieces from Step I.
\end{remark}

\subsection{Bohr recurrence}  A set $S\subset \mathbb Z$ is a \emph{set of Bohr recurrence} if for all $d\in \mathbb N$, all $\bm\alpha\in \mathbb T^d$, and all $\varepsilon
>0$, there is an $n\in S$ such that $\|n\bm\alpha\|<\varepsilon$.  ``Approximative set'' is the term used for ``set of Bohr recurrence'' in \cite{Ruzsa85}.  It is easy to verify that every set of chromatic recurrence is a set of Bohr recurrence (see \S2 of \cite{Katznelson}, for instance); whether the converse holds is problem made famous by \cite{Katznelson}. This problem is surveyed in \cite{GKR} and discussed in \cite{GriesmerKatznelsonRemarks}.

\begin{remark}\label{rem:AvoidKneser}  As observed by Ruzsa in \cite{Ruzsa85}, the use of Kneser graphs (and hence Theorem \ref{thm:Lovasz}) can be avoided if instead of Theorem \ref{thm:Kriz} one aims to prove the weaker statement ``there is a set of Bohr recurrence which is not a set of density recurrence.''     To do so, one can follow the outline of our proof of Theorem \ref{thm:Kriz}, replacing Step I with ``\textbf{Step I$'$}: find finite sets approximating the property of being Bohr recurrent and not density recurrent.''  Finite subsets of $\tilde{H}(\bm\alpha;k,\varepsilon)$ will suffice for this purpose; elementary proofs of their Bohr recurrence properties appear in \S2.2 of \cite{Katznelson} and in \S8 of \cite{griesmer2020separating}.  Step II can then be followed exactly as in our of Theorem \ref{thm:Kriz}.

However, we do not have a proof of the following corollary of Theorem \ref{thm:KrizInSminusS} that does not prove Theorem \ref{thm:KrizInSminusS} itself.  Hence, we do not have a proof of Corollary \ref{cor:Bohr} avoiding the use of Theorem \ref{thm:Lovasz}.  Such a proof might be useful to the general theory of recurrence.

\begin{corollary}\label{cor:Bohr}
If $E\subset \mathbb Z$ is infinite, then there is an $S\subset E-E$ such that $S$ is a set of Bohr recurrence but not a set of density recurrence.
\end{corollary}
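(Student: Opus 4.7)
The plan is to deduce Corollary \ref{cor:Bohr} directly from Theorem \ref{thm:KrizInSminusS}, treating it as a formal consequence rather than an independent result. Theorem \ref{thm:KrizInSminusS} already produces a set $S \subseteq E-E$ which is chromatically recurrent but not density recurrent, so all that remains is to verify that chromatic recurrence implies Bohr recurrence. This implication is asserted in the text preceding Remark \ref{rem:AvoidKneser}, so strictly speaking the corollary is already implicit in the earlier material; what I would add is simply the short verification.

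To make the implication explicit, I would argue as follows. Fix $\bm\alpha\in \mathbb T^d$ and $\varepsilon>0$. Cover $\mathbb T^d$ by finitely many half-open boxes, say $B_1,\dots,B_r$, each of diameter less than $\varepsilon$. Pulling this cover back under the homomorphism $n\mapsto n\bm\alpha$ yields a partition $\{A_1,\dots,A_r\}$ of $\mathbb Z$ with $A_j=\{n\in \mathbb Z:n\bm\alpha\in B_j\}$. Since $S$ is chromatically recurrent and in particular $r$-chromatically recurrent, some cell $A_j$ contains $a,b$ with $b-a\in S$. Because $a\bm\alpha$ and $b\bm\alpha$ lie in the common box $B_j$, their difference $(b-a)\bm\alpha$ is within $\varepsilon$ of $\mathbf 0$ in $\mathbb T^d$, so $\|(b-a)\bm\alpha\|<\varepsilon$. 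This exhibits an element of $S$ witnessing Bohr recurrence at scale $\varepsilon$ and frequency $\bm\alpha$, so $S$ is Bohr recurrent.

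Combining the two ingredients, the set $S\subseteq E-E$ furnished by Theorem \ref{thm:KrizInSminusS} is automatically Bohr recurrent (because it is chromatically recurrent) and is not density recurrent, which is exactly what the corollary asserts.

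There is no genuine obstacle in the deduction itself. The significance of the surrounding remark is the \emph{negative} statement that every known route to the corollary passes through Theorem \ref{thm:KrizInSminusS}, and hence through Lovász's theorem (Theorem \ref{thm:Lovasz}) on chromatic numbers of Kneser graphs, so finding a direct proof that bypasses Theorem \ref{thm:Lovasz} is the genuinely hard open problem raised in Remark \ref{rem:AvoidKneser}. My proposal makes no attempt to address that harder question; it simply records that, given Theorem \ref{thm:KrizInSminusS}, the corollary is one short observation away.
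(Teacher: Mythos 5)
Your proposal is correct and matches the paper's (implicit) proof exactly: the corollary is obtained by applying Theorem \ref{thm:KrizInSminusS} and then invoking the standard fact that chromatic recurrence implies Bohr recurrence, which the paper cites rather than proves but which your box-partition argument verifies correctly. You also correctly identify that the substance of Remark \ref{rem:AvoidKneser} is not the deduction itself but the open problem of finding a proof that avoids Theorem \ref{thm:Lovasz}.
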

\end{remark}

\subsection*{Acknowledgements}

This article summarizes lectures given by the author in an informal seminar with Nishant Chandgotia, Aryaman Jal, Abhishek Khetan, and Andy Parrish.  I thank them for their interest and encouragement.

An anonymous referee for the \emph{Australasian Journal of Combinatorics} contributed many corrections and improvements.

\frenchspacing

\end{document}